\renewenvironment{proof}[1][Proof]{\textbf{#1.} }
{\ \rule{0.5em}{0.5em}}
\DeclareMathOperator{\ad}{ad}
\DeclareMathOperator{\Ad}{Ad}
\DeclareMathOperator{\diag}{diag}
\DeclareMathOperator{\Ric}{Ric}
\renewcommand{\arraystretch}{1.5}
\newtheorem{theorem}{Theorem}
\newtheorem{pred}{Proposition}
\newtheorem{lem}{Lemma}
\newtheorem{remark}{Remark}
\newtheorem{ques}{Question}
\begin{document}

\title{Invariant Einstein metrics on Ledger~--~Obata spaces}

\author{Zhiqi~Chen}
\address{Zhiqi~Chen\newline
School of Mathematical Sciences and LPMC, Nankai University, \newline
Tianjin 300071, China}
\email{chenzhiqi@nankai.edu.cn}

\author{Yuri\u{i}~Nikonorov}
\address{Yu.\,G. Nikonorov \newline
Southern Mathematical Institute of Vladikavkaz Scientific Centre \newline
of the Russian Academy of Sciences, Vladikavkaz, Markus st. 22, \newline
362027, Russia}
\email{nikonorov2006@mail.ru}

\author{Yulia~Nikonorova}
\address{Yu.\,V. Nikonorova \newline
VITI MEPhI, Volgodonsk, Lenin st. 73/94, \newline
347360, Russia}
\email{nikonorova2009@mail.ru}

\begin{abstract}
In this paper, we study invariant Einstein metrics on Ledger~--~Obata spaces $F^m/\diag(F)$. In particular, we classify invariant Einstein metrics on
$F^4/\diag(F)$ and estimate the number of invariant Einstein metrics on Ledger~--~Obata spaces $F^{m}/\diag(F)$.
\vspace{2mm}

\noindent
2010 Mathematical Subject Classification: 53C25 (primary),
53C30, 17B20 (secondary).

\vspace{2mm} \noindent Key words and phrases: Ledger~--~Obata space, Einstein metric, Riemannian manifold, compact Lie algebra.

\end{abstract}

\maketitle

\section{Introduction}

The spaces $F^m/\diag(F)$ are called Ledger~--~Obata spaces, where $F$ is a connected compact simple Lie group, $F^m=F\times F\times\cdots\times F$
($m$ factors and $m\geq 2$), and $\diag(F)=\{(X,X,\dots,X)|X\in F\}$. Ledger~--~Obata spaces were first introduced in \cite{LO1968} as a natural generalization
of symmetric spaces, since $F^2/\diag(F)$ is an irreducible symmetric space. Our main interest is to study invariant Einstein metrics on Ledger~--~Obata spaces.
\smallskip

A Riemannian metric is Einstein if the Ricci curvature is a constant multiple of the metric.
Various results on Einstein manifolds could be found in the book \cite{Bes} of A.L.~Besse and in more recent surveys \cite{NRS, Wang1, Wang2}.
The most interesting feature of invariant Einstein metrics on Ledger~--~Obata spaces $F^m/\diag(F)$ is the fact that
the number of these metrics does not depend on the structure of the simple Lie group $F$. Moreover, under a suitable parameterization
the search of such metrics on every Ledger~--~Obata space $F^m/\diag(F)$ is reduced to search the critical points of a special
real-valued function. The above results were given in \cite{Nik2002}, together with the result that $F^3/\diag(F)$
admits exactly two invariant Einstein metrics up to isometry and homothety.
\smallskip

On the other hand, the approach in \cite{Nik2002} is too complicated to classify invariant Einstein metrics even for $F^4/\diag(F)$.
In this paper, we develop another approach to classify invariant Einstein metrics on $F^4/\diag(F)$ and reprove main results of \cite{Nik2002} in a simpler way.
Note also that $F^4/\diag(F)$ (with special choices of Riemannian metrics)
could be considered as a {\it generalized Wallach space} (or a {\it three locally symmetric space}), see \cite{CKL, Nik2016, NRS}.
Hence, our results complete the classification of invariant Einstein metrics on generalized Wallach spaces \cite{CN}.
\smallskip

Recall that a Riemannian metric on the Ledger~--~Obata space $F^m/\diag(F)$ is called {\it standard} or {\it Killing} if it is induced by
the minus Killing form of the Lie algebra of $F^m$. The standard Rimennian metric $\rho_{st}$ on every Ledger~--~Obata space is Einstein (see \cite{Nik2002} or Example 2 in \cite{WZ1991}).
A~direct calculation shows that  $\Ric(\rho_{st})=\frac{m+2}{4m} \rho_{st}$ (for details see \cite{Nik2002}). By Theorem 5.2 of Chapter~X of~\cite{KN}, we know that the Riemannian manifold $(F^m/\diag(F),\rho_{st})$ is irreducible.
\smallskip

Let $\mathfrak{f}$ be the Lie algebra of $F$ and $B$ the Killing form of $\mathfrak{f}$. Then $\langle \cdot,\cdot \rangle =-B(\cdot,\cdot)$
is a bi-invariant inner product on $\mathfrak{f}$. Denote $p =\dim(\mathfrak{f})=\dim(F)$. We will also use the same notations $B$ and
$\langle \cdot,\cdot \rangle$ for the Killing form and the minus Killing form on the Lie algebra
$$
\mathfrak{g}:=n\mathfrak{f}=\underbrace{\mathfrak{f}\oplus \mathfrak{f} \oplus \cdots \oplus \mathfrak{f}}_{n\,\, \mbox{\tiny summands}}
$$
of the Lie group $G:=F^n$, and $\mathfrak{h}:=\diag(\mathfrak{f})=\{(X,X,\dots,X)\,|\, X\in \mathfrak{f}\}\subset \mathfrak{g}$
for the Lie algebra of $H:=\diag(F)\subset G=F^n$.
\smallskip

The basis of our approach in this paper is the observation
that the classification of invariant Einstein metrics on every Ledger~--~Obata space $F^{n+1}/\diag(F)$ is equivalent to the classification of $\Ad(H)$-invariant Einstein metrics on the Lie group $G$, i.e. the classification of $\ad ({\mathfrak h})$-invariant Einstein metrics on $\mathfrak g$.
The latter metrics were considered in the paper \cite{Nik1998}. It is well-known that the standard metric
(generated by the minus Killing form of $\mathfrak{g}$)
on $G$ is Einstein.
Note also that G.R.~Jensen \cite{Jen}
indicated one more Einstein metric on $F\times F$ which is neither isometric
nor homothetic to the standard metric but is $\Ad(H)$-invariant. In fact, this
metric is isometric (up to a positive multiple) to the standard metric on $F^3/\diag(F)$.
\smallskip

Let $M$ denote the set of $\ad(\mathfrak h)$-invariant inner
products on $\mathfrak g$ with volume 1 corresponding to
$\langle\cdot,\cdot\rangle$. For any $(\cdot,\cdot)\in M$, denote
by $S\bigl((\cdot,\cdot)\bigr)$ the scalar curvature $S$ of the
$\Ad(H)$-invariant metric on $G$ determined by $(\cdot,\cdot)$ on
$\mathfrak g$. By the result in~\cite{Nik1998}, $(\cdot,\cdot)$ is
Einstein if and only if $(\cdot,\cdot)$ is the critical point of
the scalar curvature function $S: M\rightarrow {\mathbb R}$. It is
a partial case for the variational principle for Einstein
Riemannian metrics on compact manifolds \cite{Bes}. In this paper,
we firstly find all invariant Einstein metrics on $F^4/\diag(F)$
by this way. Then we prove:

\begin{theorem}\label{thm1}
The Ledger~--~Obata space $F^4/\diag(F)$ admits exactly three invariant Einstein metrics up to isometry and homothety.
\end{theorem}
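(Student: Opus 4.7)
The strategy is to apply the variational principle recalled above: invariant Einstein metrics on $F^4/\diag(F)$, up to homothety, correspond to critical points of the scalar curvature functional $S$ on the space $M$ of $\Ad(H)$-invariant inner products of unit volume on $\mathfrak{g}=3\mathfrak{f}=\mathfrak{f}\oplus\mathfrak{f}\oplus\mathfrak{f}$, where $\mathfrak{h}=\diag(\mathfrak{f})\subset\mathfrak{g}$. The first step is to parameterize $M$. Since each summand $\mathfrak{f}^{(i)}\subset\mathfrak{g}$ is an irreducible $\Ad(H)$-module isomorphic to the adjoint representation of $F$, and distinct summands are related by canonical $\Ad(H)$-equivariant isomorphisms, every such inner product has the form
\[
(X,Y)_A=\sum_{i,j=1}^{3} a_{ij}\,\langle X_i,Y_j\rangle
\]
for a unique symmetric positive-definite $3\times 3$ matrix $A=(a_{ij})$; positivity cuts out an open six-dimensional cone and the unit-volume normalization fixes $\det A$.

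The second step is to derive $S(A)$ explicitly. Because distinct summands of $\mathfrak{g}$ pairwise commute, the structure constants decouple, and the standard formula for the scalar curvature of a left-invariant metric on a compact Lie group expresses $S(A)$ as a universal rational function of the entries of $A$ and $A^{-1}$, with only an overall factor that depends on $F$ through $\dim F$. The Einstein condition then reduces to a symmetric matrix equation of the form $\mathcal{R}(A)=\lambda A$, with $\mathcal{R}$ rational and $\lambda$ a Lagrange multiplier; this is a polynomial system in the six entries $a_{ij}$. A key feature is that the system is universal and independent of the choice of simple $F$, in keeping with the observation recalled in the introduction that the Einstein count does not depend on $F$.

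The third step is to exploit the $S_4$-symmetry of $F^4/\diag(F)$ coming from the permutation action on the factors of $F^4$. This group preserves the Einstein set in $M$ and acts on $A$ through an explicit finite subgroup of $GL_3(\mathbb{Z})$ generated by the $S_3$ of simultaneous row--column permutations (from permuting the three non-base factors), together with an involution coming from relabelling the base factor. Looking for critical points stratified by $S_4$-isotropy, one finds: a unique fully symmetric Einstein metric, coinciding with the standard metric $\rho_{st}$; and further critical points on strata with smaller isotropy, corresponding to the metric associated with the bi-invariant metric on $G=F^3$ (the $A=cI$ point, with stabilizer $S_3$) and to a metric naturally associated with the $2+2$ partition of the factors of $F^4$. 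In each stratum the Einstein system reduces to a polynomial equation in at most one variable, which can be solved explicitly, yielding altogether the three isometry classes claimed in Theorem~\ref{thm1}.

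The main obstacle is to show that no Einstein metric has trivial $S_4$-isotropy. I would attack this by analysing the matrix equation $\mathcal{R}(A)=\lambda A$ directly: one aims to prove that any positive-definite solution must commute with a non-trivial element of the induced $S_4$-action on symmetric $3\times 3$ matrices, and hence lies on one of the symmetric strata already enumerated. The cleanest route is probably to combine the $S_4$-equivariance of $\mathcal{R}$ with a rigidity argument along slices of $M$ transverse to the $S_4$-orbits; an alternative is to analyse the Hessian of $\log S$ on the unit-volume level set to show that any interior critical point has maximal symmetry. This exclusion step is the principal technical difficulty, because the six-dimensional moduli space does not tautologically reduce to lower dimension, and careful control of the positivity cone and possible degenerations is needed to be sure that no exotic Einstein metric has been missed.
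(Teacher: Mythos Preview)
Your setup and parameterization are essentially equivalent to the paper's: the paper parameterizes by the Cholesky factor (a lower triangular matrix with positive diagonal) rather than by the symmetric positive-definite matrix $A$ itself, derives the explicit scalar curvature formula \eqref{scal4}, and sets up the same Lagrange problem. The three Einstein metrics you anticipate are exactly the routine ones attached to the three partitions $3=3$, $3=2+1$, $3=1+1+1$ (your ``$2+2$'' description is a slip; it is the product of the standard metrics on $F^3/\diag(F)$ and $F^2/\diag(F)$).

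The genuine gap is precisely the step you flag as the ``main obstacle'': you have no argument that every Einstein metric has non-trivial $S_4$-isotropy, and neither of your suggested mechanisms (an $S_4$-equivariance/rigidity argument on transverse slices, or a Hessian convexity argument for $\log S$) is made to work. In fact the paper's computation shows why this is delicate: the scalar curvature functional has $29$ critical points on the normalized moduli space, many of which are \emph{not} fixed by any non-trivial element of the obvious $S_4$-action on the lower-triangular parameterization (they lie on generic $S_4$-orbits), and it is only \emph{after} explicitly identifying them via the isometries of Proposition~\ref{isommat} that they collapse to three classes. So a symmetry-stratification argument cannot simply assert ``no critical point has trivial isotropy''; at best it would have to prove that every $S_4$-orbit of critical points meets a symmetric stratum, which is essentially the same classification problem.

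The paper's route bypasses this entirely by computation: it eliminates variables with Gr\"obner bases (Maple's \texttt{EliminationIdeal}) to solve the polynomial system \eqref{scal8} completely, tabulates all $29$ solutions, and then checks case by case, using the matrix isometries of Proposition~\ref{isommat} (left multiplication by $O(3)$, column permutations, and right multiplication by the matrices $T^k$), that they fall into exactly three isometry/homothety classes distinguished by the invariant $\widetilde{V}=xyz\in\{1,\,4\sqrt{6}/9,\,3\sqrt{3}/5\}$. If you want to avoid the computer algebra, you would need a genuine a priori reason forcing every solution into a symmetric stratum; as written, your plan does not supply one.
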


It is proved in \cite{Nik2002} that there are at least two invariant Einstein metrics on $F^{n+1}/\diag(F)$ for $n\geq 3$. Based on standard Einstein metrics on $F^{n+1}/\diag(F)$, we have the following theorem.

\begin{theorem}\label{thm2}
Every Ledger~--~Obata space $F^{n+1}/\diag(F)$ admits at least $p(n)$ invariant Einstein metrics up to isometry and homothety, where $p(n)$ is the number
of integer partitions of $n$. In particular, there are more than $\frac{1}{13n}\exp\left({\frac{5}{2}}\sqrt{n}\right)$ invariant Einstein metrics
for all $n$ up to isometry and homothety.
\end{theorem}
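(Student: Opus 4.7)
By the equivalence recalled in the introduction, it suffices to produce at least $p(n)$ pairwise non-homothetic $\ad(\mathfrak h)$-invariant Einstein inner products on $\mathfrak g=\mathfrak f^n$, one for each integer partition of $n$. For a partition $\mu=(\mu_1,\dots,\mu_k)$ of $n$ I would group the $n$ copies of $\mathfrak f$ in $\mathfrak g$ into $k$ blocks of sizes $\mu_1,\dots,\mu_k$ and consider the subalgebra
\[
\mathfrak k_\mu=\bigoplus_{j=1}^{k}\diag(\mathfrak f^{\mu_j})\subset\mathfrak g,
\]
which is isomorphic to $\mathfrak f^k$ and contains $\mathfrak h=\diag(\mathfrak f)$ as its own diagonal. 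Let $\mathfrak p_\mu$ be the $\langle\cdot,\cdot\rangle$-orthogonal complement of $\mathfrak k_\mu$; both summands are $\ad(\mathfrak h)$-invariant, so the two-parameter family
\[
(\cdot,\cdot)_{\alpha,\beta}=\alpha\,\langle\cdot,\cdot\rangle\big|_{\mathfrak k_\mu}+\beta\,\langle\cdot,\cdot\rangle\big|_{\mathfrak p_\mu},\qquad\alpha,\beta>0,
\]
lies in the configuration space $M$ after volume normalization. The Einstein metric attached to $\mu$ will be a member of this family.

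To produce such a member I would combine the variational criterion recalled in the paper with the principle of symmetric criticality. The permutation group $W_\mu=S_{\mu_1}\times\cdots\times S_{\mu_k}$ acts on $\mathfrak g$ by permuting the factors inside each block; the action is by automorphisms, commutes with $\ad(\mathfrak h)$, and preserves both $\mathfrak k_\mu$ and $\mathfrak p_\mu$. Hence $W_\mu$ acts on $M$ preserving the scalar curvature $S$, and any critical point of $S|_{M^{W_\mu}}$ is a critical point of $S$ on $M$ and therefore Einstein. A direct Ricci computation in the basis adapted to $\mathfrak g=\mathfrak k_\mu\oplus\mathfrak p_\mu$, using the reductive relation $[\mathfrak k_\mu,\mathfrak p_\mu]\subset\mathfrak p_\mu$ and the structure constants of $\mathfrak f^n$, reduces the Einstein condition inside this two-parameter family to an algebraic equation in $\alpha/\beta$ which always admits a positive solution. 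For the trivial partition $\mu=(1,\dots,1)$ the complement $\mathfrak p_\mu$ vanishes and one recovers the standard metric, while every other partition yields a genuinely new Einstein metric (the case $n=2$, $\mu=(2)$ reproduces Jensen's original two-parameter construction). Distinctness across different partitions follows from the observation that $\mathfrak k_\mu$ is recoverable as an eigenspace of the metric operator of $(\cdot,\cdot)_{\alpha,\beta}$ relative to $\langle\cdot,\cdot\rangle$, and that for $\mu\ne\mu'$ the subalgebras $\mathfrak k_\mu,\mathfrak k_{\mu'}$ are not conjugate by any automorphism of $\mathfrak g$ respecting the Ledger--Obata structure; hence the corresponding metrics are not isometric up to homothety.

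For the numerical estimate I would invoke the Hardy--Ramanujan asymptotic
\[
p(n)\sim\frac{1}{4n\sqrt{3}}\exp\!\bigl(\pi\sqrt{2n/3}\bigr).
\]
Since $\pi\sqrt{2/3}>5/2$ and $4\sqrt{3}<13$, this gives $p(n)>\frac{1}{13n}\exp(\tfrac{5}{2}\sqrt{n})$ for all sufficiently large $n$; the finitely many remaining small values of $n$ are verified directly against standard tables of the partition function.

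The main obstacle lies in the Ricci computation of the middle step: although the reductive decomposition makes the calculation structurally clean, the dependence of the structure constants of $\mathfrak g$ on the block sizes $\mu_j$ must be tracked carefully so as to confirm that the resulting equation in $\alpha/\beta$ always admits a positive root distinct from the trivial one corresponding to the standard metric.
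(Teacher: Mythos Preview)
Your construction has a genuine gap. The family $(\cdot,\cdot)_{\alpha,\beta}=\alpha\langle\cdot,\cdot\rangle|_{\mathfrak k_\mu}+\beta\langle\cdot,\cdot\rangle|_{\mathfrak p_\mu}$ is a \emph{product} of Jensen-type metrics over the blocks, all sharing the \emph{same} pair $(\alpha,\beta)$. For the product to be Einstein each block must be Einstein with the same constant, but the nontrivial Einstein ratio of the Jensen family on $F^{m}$ depends on $m$ (it is $\alpha/\beta=1/(m+1)$, the pullback of $\rho_{st}$ on $F^{m+1}/\diag(F)$). Hence, as soon as the partition has two unequal parts, the only simultaneous solution is $\alpha=\beta$, the bi-invariant metric, which is independent of $\mu$. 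Concretely, for $n=3$, $\mu=(2,1)$: the size-$1$ block forces Einstein constant $1/(4\alpha)=3/(4\beta)$, while the nontrivial Jensen metric on the size-$2$ block has Einstein constant $5/(12\beta)$; these never agree. Your appeal to symmetric criticality does not repair this, since $M^{W_\mu}$ is strictly larger than the two-parameter slice (already for $\mu=(2,1)$ it is four-dimensional, because $W_\mu$-invariant inner products on $\mathbb R^n$ allow independent scalings of each block diagonal and off-diagonal, plus cross-terms between blocks), so critical points of $S$ on your slice need not be Einstein. Consequently your scheme does not produce $p(n)$ distinct metrics.

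The paper's argument avoids this by allowing the blocks to be scaled independently: for a partition $n=k_1+\cdots+k_l$ it simply takes the Riemannian product of the standard Einstein metrics on $F^{k_j+1}/\diag(F)$, each rescaled to Einstein constant $1$ (equivalently, the block-diagonal matrix $\diag(A^{st}_{k_1},\dots,A^{st}_{k_l})$). No computation is needed to see this is Einstein. Distinctness up to isometry and homothety follows from the de~Rham decomposition together with the irreducibility of each $(F^{m}/\diag(F),\rho_{st})$ recorded in the introduction; your eigenspace-recovery argument is not needed (and would fail precisely at $\alpha=\beta$). For the numerical inequality the paper quotes Mar\'oti's uniform lower bound $p(n)>\tfrac{1}{13n}\exp(\tfrac{5}{2}\sqrt{n})$ valid for every $n\geq 1$, so no asymptotics-plus-small-case check is required.
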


The paper is organized as follows. In section 2, we list all required constructions and results for Theorems~\ref{thm1} and~\ref{thm2}.
For example, the equivalence between the classification of invariant Einstein metrics on every Ledger~--~Obata space $F^{n+1}/\diag(F)$ and the classification of $\Ad(H)$-invariant Einstein metrics on the Lie group $F^n$, a new formula for the scalar curvature function, and sufficient conditions for isometric metrics. In section 3, we classify invariant Einstein metrics on $F^3/\diag(F)$ by our approach, which is simpler than that in \cite{Nik2002}. Section 4 is to compute all invariant
Einstein metrics on $F^4/\diag(F)$ by the new approach. Then we prove Theorem~\ref{thm1} in section 5. In section 6, we discuss the number of invariant Einstein metrics
on Ledger~--~Obata spaces $F^{n+1}/\diag(F)$ and prove Theorem~\ref{thm2}.
Moreover, we discuss the number of the critical points of the scalar curvature function for every Ledger~--~Obata space $F^{n+1}/\diag(F)$
and show in particular that the number is more than $ \bigl(1+\sqrt{2}\bigr)^{n-1}$. This estimate shows the technical difficulties in the problem of  classifying invariant Einstein metrics on Ledger~--~Obata spaces $F^{n+1}/\diag(F)$ for big values of $n$. Finally, we propose some unsolved questions.

\section{The set of metrics and the scalar curvature function}

In this section we give all required constructions and results for the proof of main results.

\subsection{Identification of metrics}
Note that the group $\widetilde{G}=F^{n+1}$ acts transitively on the Lie group $G=F^n$ by
$$
(a_1,a_2,\dots,a_n,a_{n+1})\circ (x_1,x_2,\dots, x_n)=(a_1x_1a_{n+1}^{-1},\,a_2x_2a_{n+1}^{-1},\,\dots,\, a_nx_na_{n+1}^{-1})
$$
with the isotropy group $\widetilde{H}=\diag(F)\subset \widetilde{G}=F^{n+1}$ at the unit of $G=F^n$,
which identifies $F^n$ with the Ledger~--~Obata space $F^{n+1}/\diag(F)$.
\smallskip

Since the adjoint representation of every real compact simple Lie algebra is real and irreducible, we easily get the following lemma.

\begin{lem}\label{simplealg}
Let $\mathfrak{f}$ be a real compact simple Lie algebra. Then every linear mapping $Q:\mathfrak{f} \rightarrow \mathfrak{f}$ commuting with all operators of the adjoint action $\ad(X):\mathfrak{f}\rightarrow \mathfrak{f}$, where $X\in \mathfrak{f}$, is proportional to the identity map.
\end{lem}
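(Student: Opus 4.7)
The plan is to reduce the statement to a standard application of Schur's lemma, using the fact that $\mathfrak{f}$ is compact and simple to guarantee absolute irreducibility of the adjoint representation.

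First I would complexify. Set $\mathfrak{f}_{\mathbb{C}}:=\mathfrak{f}\otimes_{\mathbb{R}}\mathbb{C}$ and extend $Q$ by $\mathbb{C}$-linearity to a $\mathbb{C}$-linear operator $Q_{\mathbb{C}}:\mathfrak{f}_{\mathbb{C}}\to\mathfrak{f}_{\mathbb{C}}$. Because $\mathfrak{f}$ is a compact simple real Lie algebra, its complexification $\mathfrak{f}_{\mathbb{C}}$ is a complex simple Lie algebra, hence the adjoint representation $\ad:\mathfrak{f}_{\mathbb{C}}\to\End(\mathfrak{f}_{\mathbb{C}})$ is irreducible over $\mathbb{C}$. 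Since $Q$ commutes with $\ad(X)$ for every $X\in\mathfrak{f}$ and these operators $\mathbb{C}$-linearly span $\{\ad(Z):Z\in\mathfrak{f}_{\mathbb{C}}\}$, the operator $Q_{\mathbb{C}}$ intertwines the adjoint representation of $\mathfrak{f}_{\mathbb{C}}$.

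By Schur's lemma over $\mathbb{C}$, there exists $\lambda\in\mathbb{C}$ with $Q_{\mathbb{C}}=\lambda\cdot\Id_{\mathfrak{f}_{\mathbb{C}}}$. Since $Q$ is a real operator, $Q_{\mathbb{C}}$ preserves the real form: for any nonzero $X\in\mathfrak{f}$ we have $\lambda X=Q(X)\in\mathfrak{f}$, forcing $\lambda\in\mathbb{R}$. Hence $Q=\lambda\cdot\Id$ on $\mathfrak{f}$.

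There is no substantive obstacle here; the statement is the ``real-type'' form of Schur's lemma for the adjoint representation, and the only fact we must quote is that the complexification of a compact simple real Lie algebra is simple. A purely real alternative proof would observe that the Killing form of $\mathfrak{f}$ furnishes an $\ad(\mathfrak{f})$-invariant nondegenerate symmetric bilinear form, which shows that the adjoint representation is of real type; then the real version of Schur's lemma gives that its commutant equals $\mathbb{R}\cdot\Id$, yielding the same conclusion.
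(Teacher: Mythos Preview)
Your proof is correct and is essentially the same approach as the paper's: the paper simply remarks that the adjoint representation of a real compact simple Lie algebra is real and irreducible and deduces the lemma from Schur's lemma, which is exactly your second (purely real) argument; your complexification argument is just a more explicit way to reach the same conclusion.
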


It should be noted that
there are several $\Ad(\widetilde{H})$-invariant complements to
$\widetilde{\frak{h}}$ in $\widetilde{\frak{g}}$. In particular,

\begin{lem}[Lemma 1.1 in \cite{Nik2002}]\label{strinmod}
Every irreducible $\Ad(\widetilde{H})$-invariant submodule in $\widetilde{\frak{g}}$ has the form
\begin{equation}\label{irred}
\left\{(\alpha_1 X, \alpha_2 X,\dots, \alpha_{n+1} X)\subset\widetilde{\frak{g}}\,|\, X \in \frak{f}\right\}
\end{equation}
for some fixed $\alpha_i$.
\end{lem}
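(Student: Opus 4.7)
My plan is to reduce the statement to a direct application of Schur's lemma in the form of Lemma~\ref{simplealg}. First, I would observe that $\widetilde{H}=\diag(F)$ acts on each summand $\mathfrak{f}_i$ of $\widetilde{\mathfrak{g}}=\mathfrak{f}_1\oplus\cdots\oplus\mathfrak{f}_{n+1}$ via the usual adjoint representation of $F$, and this representation is irreducible since $\mathfrak{f}$ is a compact simple Lie algebra. Hence $\widetilde{\mathfrak{g}}$ is an isotypic $\Ad(\widetilde{H})$-module, a direct sum of $n+1$ pairwise isomorphic irreducible components.

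Next, for an irreducible $\Ad(\widetilde{H})$-submodule $V\subset\widetilde{\mathfrak{g}}$, I would restrict each coordinate projection $\pi_i\colon\widetilde{\mathfrak{g}}\to\mathfrak{f}_i$ to $V$. Because $\Ad(\widetilde{H})$ acts block-diagonally, each $\pi_i|_V$ is $\Ad(\widetilde{H})$-equivariant and therefore, by Schur's lemma, is either zero or a bijection onto $\mathfrak{f}_i$. Since $V\ne 0$, at least one such restriction, say $\pi_{i_0}|_V$, is an isomorphism.

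Finally, I would compose the maps: for every $j$, the map $\pi_j|_V\circ(\pi_{i_0}|_V)^{-1}\colon\mathfrak{f}\to\mathfrak{f}$ commutes with all $\ad(X)$, $X\in\mathfrak{f}$, so by Lemma~\ref{simplealg} it equals $\alpha_j\cdot\Id$ for some real scalar $\alpha_j$ (with the normalization $\alpha_{i_0}=1$). Writing $X=\pi_{i_0}(v)$ for $v\in V$, this yields $v=(\alpha_1 X,\ldots,\alpha_{n+1} X)$, which is exactly the claimed form. The argument is essentially mechanical, and I anticipate no serious obstacle here; the only bookkeeping point is to verify that Lemma~\ref{simplealg} genuinely applies to each composition $\pi_j|_V\circ(\pi_{i_0}|_V)^{-1}$, which follows directly from the block-diagonality of the $\Ad(\widetilde{H})$-action on $\widetilde{\mathfrak{g}}$.
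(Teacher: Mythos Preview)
Your proof is correct and follows essentially the same route as the paper's: both restrict the coordinate projections to the irreducible submodule, use irreducibility to see that each such restriction is either zero or a bijection onto $\mathfrak{f}$, and then apply Lemma~\ref{simplealg} to the compositions $\pi_j\circ\pi_{i_0}^{-1}$ to obtain the scalars $\alpha_j$. The only cosmetic difference is that the paper phrases the ``zero or bijection'' step as ``the image of $\theta_i$ is an ideal in $\mathfrak{f}$'' rather than invoking Schur's lemma directly.
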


For the reader's convenience we reproduce the draft of the proof of this lemma here. Let $\mathfrak{m}$ be an
irreducible $\Ad(\widetilde{H})$-invariant submodule in $\widetilde{\frak{g}}$.
Consider the linear maps $\theta_i:\mathfrak{m} \rightarrow \mathfrak{f}$, $i=1,2,\dots,n+1$, defined by $\theta_i(X)=X_i$, where $X=(X_1,X_2,\dots,X_n,X_{n+1})\in \mathfrak{m}\subset(n+1)\mathfrak{f}$. Since $\mathfrak{m}$ is $\ad(\widetilde{\frak{h}})$-invariant, for any $\widetilde{Z}=(Z,Z,\cdots,Z)\in \widetilde{\frak{h}}$ where $Z\in \frak{f}$, we see that $\theta_i([\widetilde{Z},X])=[Z,\theta_i(X)]$.
In particular, the image of $\theta_i$ is an ideal in $\frak{f}$, hence it is either $0$ or $\frak{f}$ itself.
At least one of such image should be non-zero. Without loss of generality, we may assume that this is the case for $i=n+1$. Hence, we may define the maps
$\kappa_i: \mathfrak{f} \rightarrow \mathfrak{f}$ by $\kappa_i=\theta_i\circ \theta_{n+1}^{-1}$ for any $i=1,2,\dots,n$.
By the above discussion we see that $\kappa_i$ is a linear map
commuting with all operators of the adjoint action
$\ad(Z):\mathfrak{f}\mapsto \mathfrak{f}$ for any $Z\in \mathfrak{f}$. By Lemma \ref{simplealg}, $\kappa_i$ is proportional to the identity map, which implies Lemma \ref{strinmod}.
\smallskip

Clearly, the isotropy subalgebra $\widetilde{\frak{h}}$ is a
submodule as in Lemma \ref{strinmod} with
$\alpha_1=\alpha_2=\cdots=\alpha_{n+1}\neq 0$. Denote by
$\frak{p}$ the union of all submodules \eqref{irred} with
$\alpha_1+\alpha_2+\cdots+\alpha_{n+1}=0$ (it could be represented
also as a direct sum of $n$ suitable submodules of this type).
Note that $\frak{p}$ is the $\langle \cdot,\cdot
\rangle$-orthogonal complement to $\widetilde{\frak{h}}$ in
$\widetilde{\frak{g}}$, where  $\langle \cdot,\cdot \rangle$ is
the minus Killing form of $\widetilde{\frak{g}}$.
\smallskip

Define the linear map $\varphi_{n+1}:\frak{p}\rightarrow \frak{g}=n\frak{f}$ by
\begin{equation}\label{ident}
(\alpha_1X,\alpha_2X,\dots,\alpha_{n+1}X)\mapsto \bigl((\alpha_1-\alpha_{n+1})X,(\alpha_2-\alpha_{n+1})X,\dots,(\alpha_{n}-\alpha_{n+1})X \bigr).
\end{equation}
It is clear that
$$
\varphi_{n+1}^{-1} \bigl((\alpha_1 X, \alpha_2 X,\dots, \alpha_{n} X,0)\bigr)=
\bigl((\alpha_1-s)X,(\alpha_2-s)X,\dots,(\alpha_{n}-s)X, -sX \bigr),
$$
where $s=(\alpha_1+\alpha_2+\cdots+\alpha_n)/(n+1)$. The map $\varphi_{n+1}$
induces the identification between the set of $\Ad(\widetilde{H})$-invariant metrics on the Ledger~--~Obata space
$\widetilde{G}/\widetilde{H}=F^{n+1}/\diag(F)$ and the set of $\Ad(H)$-invariant metrics on $G=F^n$ (it is assumed that $Z\in \frak{p}$ and
$\varphi_{n+1}(Z) \in \frak{g}$ have the same length in the induced norms).

\subsection{Parameterization of metrics} Now, we will deal preferably with $\Ad(H)$-invariant or, equivalently, with $\ad ({\mathfrak h})$-invariant
inner products on the Lie algebra $\mathfrak g$. Sometimes we will use also the above identification of such inner products with $\Ad(\widetilde{H})$-invariant
metrics on the Ledger~--~Obata space $\widetilde{G}/\widetilde{H}=F^{n+1}/\diag(F)$.
\smallskip

Let $(\cdot,\cdot)$ be an $\ad ({\mathfrak h})$-invariant inner product on $\mathfrak g$. First of all, we choose a $(\cdot,\cdot)$-orthonormal
basis of $\mathfrak g$. By Lemma \ref{strinmod}, there exists $(a_1,\dots,a_n)\in {\mathbb R}^n$ such that every $\ad ({\mathfrak h})$-invariant
irreducible submodule $\tilde{\mathfrak g}$ is of the form
$$\tilde{\mathfrak g}=\{(a_1X,a_2X,\dots,a_nX)|X\in{\mathfrak f}\}.$$ Furthermore, there are $n$ orthogonal $\ad ({\mathfrak h})$-invariant
irreducible submodules $\mathfrak g_1,\dots,\mathfrak g_n$ of $\mathfrak g$ such that $\mathfrak g=\mathfrak g_1\oplus\cdots\oplus\mathfrak g_n$ and we
can diagonalize $(\cdot,\cdot)$ and $\langle\cdot,\cdot\rangle$ simultaneously, where, for $i=1,\dots,n$,
${\mathfrak g}_i=\{(b_{i1}X,b_{i2}X,\dots,b_{in}X)|X\in{\mathfrak f}\}$.
For any $X\in\mathfrak{f}$ with $\langle X,X \rangle =1$, define a vector space $L^X$ by
\begin{equation}\label{linsp}
L^X:=\{(a_1 X, a_2 X, a_3 X, \dots, a_{n-1} X, a_n X)\,|\, a_i \in \mathbb{R}, i=1,\dots,n\}\subset \mathfrak{g},
\end{equation}
which is naturally identified with $\mathbb{R}^n$. Moreover, $\langle \cdot, \cdot \rangle$ is identified with the standard inner product on  $\mathbb{R}^n$. Clearly, $\{(b_{i1}X,b_{i2}X,\dots,b_{in}X)\}_{i=1,2,\dots,n}$ is an orthogonal basis of $L^X$, which gives
a $(\cdot,\cdot)$-orthonormal basis $\{f_1,\dots,f_n\}$ of $L^X$. For some real numbers $c_i\not=0$ and $\alpha_{ij}$, $i,j=1,\dots,n$,
\begin{equation}\label{bases}
f_i=f_i(X)=c_i(b_{i1}X,b_{i2}X,\dots,b_{in}X)=(\alpha_{i1} X, \alpha_{i2} X, \alpha_{i3} X, \dots,  \alpha_{in} X).
\end{equation}
Let $\{X_1,\dots,X_p\}$ be a $\langle \cdot, \cdot \rangle$-orthogonal basis of $\mathfrak{f}$. Then we have a $(\cdot,\cdot)$-orthonormal basis of $\mathfrak{g}$ \begin{equation}\label{baseg}
f_{ij}=f_i(X_j)=(\alpha_{i1} X_j, \alpha_{i2} X_j,  \dots,  \alpha_{in} X_j), \quad 1\leq j \leq p, \, 1 \leq i \leq n.
\end{equation}

Obviously, this basis is determined by the non-degenerate $(n\times n)$-matrix $A=(\alpha_{ij})$. For any matrix $Q\in O(n)$,
it is easy to see that the matrix $\widetilde{A}:=QA$ also determines a $(\cdot,\cdot)$-orthonormal basis in $L^X$. Without loss of generality,
we may suppose that $A$ is lower triangle with positive elements in the main diagonal, i.e. $\alpha_{ij}=0$ for $j>i$ and $\alpha_{ii}>0$.
On the other hand, we can construct an $\ad({\mathfrak h})$-invariant inner product on $\mathfrak g$ from a non-degenerate matrix
$(\alpha_{ij})$ satisfying $\alpha_{ij}=0$ for $j>i$ and $\alpha_{ii}>0$. That is, {\it there is a one-to-one correspondence between
$\ad({\mathfrak h})$-invariant inner products on $\mathfrak{g}$ and non-degenerate $(n\times n)$-matrices $(\alpha_{ij})$
satisfying $\alpha_{ij}=0$ for $j>i$ and $\alpha_{ii}>0$}. Hence, the latter set of matrices can be used for the parameterization
of $\ad({\mathfrak h})$-invariant inner products on $\mathfrak{g}$.

\subsection{The scalar curvature computation}
Now we calculate the scalar curvature of the Riemannian metric on $F^n$ generated by a matrix $A$.
For any vector $(b_1 X, b_2 X, \dots,  b_n X)\in L^X$, \begin{equation}\label{inver1}
(b_1 X, b_2 X,  \dots,  b_n X)= \sum_{i=1}^n d_i f_i(X)
\end{equation}
for some real numbers $d_1,\dots,d_n$. It follows that $$\sum_{i=1}^n \alpha_{ij} d_i=b_j, \quad j=1,\dots,n.$$
Let $A^{-1}=(\beta_{ij})$ be the inverse matrix of $A$. Then \begin{equation}\label{inver2}
d_i=\sum_{i} \delta_{ik} d_i=\sum_{i,j} \alpha_{ij}\beta_{jk}d_i=\sum_{j} b_j\beta_{jk}
\end{equation}
Using the Formula 7.29 from \cite{Bes}, the scalar curvature $S\bigl((\cdot, \cdot)\bigr)$ of the metric $(\cdot,\cdot)$ is
\begin{equation}\label{scal1}
S\bigl((\cdot, \cdot )\bigr)=
-\frac{1}{2} \sum\limits_{i,a} B(f_{ia},f_{ia})-\frac{1}{4} \sum\limits_{i,j,k,a,b,c}
\bigl([f_{ia},f_{jb}],f_{kc}\bigr)^2.
\end{equation}
Due to \eqref{baseg}, we get $$B(f_{ij},f_{ij})= \sum_{j=1}^n \alpha_{ij}^2 B(X_j,X_j)=-\sum_{j=1}^n \alpha_{ij}^2$$
since $B(X_j,X_j)=-\langle X_j,X_j \rangle =-1$. Hence,
$$
\sum\limits_{i,a} B(f_{ia},f_{ia})=-\sum\limits_{i,a} \bigl(\sum_{a=1}^n \alpha_{ia}^2\bigr)=
-\dim(\mathfrak{f})\sum\limits_{i} \bigl(\sum_{a=1}^n \alpha_{ia}^2\bigr)=
-p\sum\limits_{i=1}^p\bigl(\sum\limits_{j=1}^n \alpha_{ij}^2\bigr)=-p\sum\limits_{i,j} \alpha_{ij}^2.
$$
Denote by $C_{ij}^k$ the structural constants of the Lie algebra $\mathfrak{f}$ in the basis $\{X_1,\dots,X_p\}$, i.e. $$[X_i,X_j]=\sum_k C_{ij}^k X_k.$$ Then we have
\begin{eqnarray*}
[f_{ia},f_{jb}]&=&[(\alpha_{i1} X_a, \alpha_{i2} X_a,  \dots,  \alpha_{in} X_a),(\alpha_{j1} X_b, \alpha_{j2} X_b,  \dots,  \alpha_{jn} X_b)]\\
&=&\left(\alpha_{i1}\alpha_{j1}[X_a,X_b],\alpha_{i2}\alpha_{j2}[X_a,X_b], \dots, \alpha_{in}\alpha_{jn}[X_a,X_b]\right) \\
&=& \left(\alpha_{i1}\alpha_{j1}\sum_k C_{ab}^k X_k,\,\alpha_{i2}\alpha_{j2}\sum_k C_{ab}^k X_k, \dots, \alpha_{in}\alpha_{jn}\sum_k C_{ab}^k X_k\right).
\end{eqnarray*}
Hence, by \eqref{inver1},
\begin{eqnarray*}
\bigl([f_{ia},f_{jb}], f_{kc}\bigr)&=&
\Bigl(\Bigl(\alpha_{i1}\alpha_{j1}\sum_k C_{ab}^k X_k, \dots, \alpha_{in}\alpha_{jn}\sum_k C_{ab}^k X_k\Bigr),
\bigl(\alpha_{k1} X_c,   \dots,  \alpha_{kn} X_c\bigr)\Bigr)\\
&=&\Bigl(\bigl(\alpha_{i1}\alpha_{j1} C_{ab}^c X_c, \dots, \alpha_{in}\alpha_{jn} C_{ab}^c X_c\bigr),
\bigl(\alpha_{k1} X_c,  \dots,  \alpha_{kn} X_c\bigr)\Bigr).
\end{eqnarray*}
Similar to \eqref{inver1}, assume that $\bigl(\alpha_{i1}\alpha_{j1} C_{ab}^c X_c, \dots, \alpha_{in}\alpha_{jn} C_{ab}^c X_c\bigr)=\sum_{i=1}^n d_i f_i(X)$.
According to \eqref{inver2}, we get
$d_m=\sum_{l} b_l\beta_{lm}=\sum_{l} \alpha_{il}\alpha_{jl} \beta_{lm} C_{ab}^c.$ Therefore,
\begin{eqnarray*}
\bigl([f_{ia},f_{jb}],f_{kc}\bigr)^2=
d_k^2= \left(C_{ab}^c \right)^2 \left( \sum_{l} \alpha_{il}\alpha_{jl} \beta_{lk} \right)^2.
\end{eqnarray*}
Together with
\begin{eqnarray*}
\sum\limits_{a,b,c}\left(C_{ab}^c \right)^2&=&\sum\limits_{a,b,c} \langle [X_a,X_b],X_c\rangle^2=\sum\limits_{a,b} \langle [X_a,X_b],[X_a,X_b]\rangle\\
&=& -\sum\limits_{a,b} \langle [X_a, [X_a,X_b],X_b\rangle =-\sum\limits_{a} B(X_a,X_a)\\
&=&\dim(\mathfrak{f})=p,
\end{eqnarray*}
we have
\begin{eqnarray*}
\sum\limits_{i,j,k,a,b,c}
\bigl([f_{ia},f_{jb}],f_{kc}\bigr)^2 & =&
\sum\limits_{i,j,k,a,b,c} \left(C_{ab}^c \right)^2 \left( \sum_{l} \alpha_{il}\alpha_{jl} \beta_{lk} \right)^2 \\
&=&
\sum\limits_{a,b,c}\left(C_{ab}^c \right)^2 \cdot \sum\limits_{i,j,k} \left( \sum_{l} \alpha_{il}\alpha_{jl} \beta_{lk} \right)^2 \\
&=& p\cdot \sum\limits_{i,j,k} \left( \sum_{l} \alpha_{il}\alpha_{jl} \beta_{lk} \right)^2.
\end{eqnarray*}
Let $\Lambda_{i,j,k}=\Bigl( \sum_{l} \alpha_{il}\alpha_{jl} \beta_{lk} \Bigr)^2$. By \eqref{scal1}, we have
\begin{equation}\label{scal2}
\widetilde{S}=\frac{4}{p}\cdot S\bigl((\cdot, \cdot )\bigr)=
2\sum\limits_{i,j} \alpha_{ij}^2-\sum\limits_{i,j,k}\Lambda_{i,j,k}.
\end{equation}

\subsection{Simplification} The above formula is the key element for applying the variation principle to search invariant Einstein metrics on Ledger~--~Obata spaces.
\smallskip

First of all, $\Lambda_{i,j,k}=\Lambda_{j,i,k}$ for all indices. Then it is enough to study the case $j \geq i$.
Suppose $A$ is lower triangle. Then we have $\Lambda_{i,j,k}=0$ for $k >i$.
Moreover, for any $i$, it is clear that $$\beta_{ii}=(\alpha_{ii})^{-1}, \quad \beta_{i(i-1)}=-\alpha_{i(i-1)}\bigl(\alpha_{(i-1)(i-1)}\cdot\alpha_{ii}\bigr)^{-1},$$
which implies $\Lambda_{i,i,i}=\alpha_{ii}^2$ and $\Lambda_{i,j,i}=\Lambda_{j,i,i}=\alpha_{ji}^2$ for $j>i$. It follows that
\begin{equation}\label{scal3}
\widetilde{S}=\frac{4}{p}\cdot S\bigl((\cdot, \cdot )\bigr)=
\sum\limits_{i} \alpha_{ii}^2-\sum\limits_{k < \min(i,j)}\Lambda_{i,j,k}.
\end{equation}
Note that the volume condition means
\begin{equation}\label{vol1}
\det(A)=\prod_{i=1}^n \alpha_{ii}=1.
\end{equation}
Let us note also the following simple lemma.
\begin{lem}\label{prod1}
If the matrix $A$ is block-diagonal, i.e. $A=\diag(A_1,A_2)$, where $A_1$ is a $(k\times k)$-matrix and $A_2$ is an $(l\times l)$-matrix with $k+l=n$,
the corresponding Riemannian metric on $G=F^n$ is a product of some $\Ad(\diag(F))$-invariant metrics on $F^k$ and $F^l$.
\end{lem}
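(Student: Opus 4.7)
The plan is to read off the product structure directly from the explicit basis \eqref{baseg}. When $A=\diag(A_1,A_2)$ with $A_1$ a $(k\times k)$-block and $A_2$ an $(l\times l)$-block, the entry $\alpha_{ij}$ vanishes whenever the indices $i,j$ lie in different blocks. Consequently, for $i\le k$ the vector $f_i(X)=(\alpha_{i1}X,\dots,\alpha_{ik}X,0,\dots,0)$ is supported on the first $k$ copies of $\mathfrak{f}$, while for $i>k$ the vector $f_i(X)=(0,\dots,0,\alpha_{i,k+1}X,\dots,\alpha_{in}X)$ is supported on the last $l$ copies. Set
$\mathfrak{g}'=\{(Y_1,\dots,Y_k,0,\dots,0)\mid Y_s\in\mathfrak{f}\}$ and $\mathfrak{g}''=\{(0,\dots,0,Y_{k+1},\dots,Y_n)\mid Y_s\in\mathfrak{f}\}$. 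Then $\mathfrak{g}=\mathfrak{g}'\oplus\mathfrak{g}''$ is a direct sum of commuting ideals and corresponds to the group decomposition $G=F^n=F^k\times F^l$.

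Next I would observe that the $(\cdot,\cdot)$-orthonormal basis $\{f_{ij}\}_{1\le i\le n,\,1\le j\le p}$ splits along this decomposition into $\{f_{ij}:i\le k\}\subset\mathfrak{g}'$ and $\{f_{ij}:i>k\}\subset\mathfrak{g}''$. Therefore $\mathfrak{g}'$ and $\mathfrak{g}''$ are $(\cdot,\cdot)$-orthogonal, and $(\cdot,\cdot)$ is the orthogonal direct sum of its restrictions $(\cdot,\cdot)_1$ and $(\cdot,\cdot)_2$. Equivalently, the resulting left-invariant Riemannian metric on $G$ is the Riemannian product of the two left-invariant metrics on $F^k$ and $F^l$ determined by $(\cdot,\cdot)_1$ and $(\cdot,\cdot)_2$.

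Finally, I would verify the $\Ad(\diag(F))$-invariance of each factor metric. The subalgebra $\mathfrak{h}=\diag(\mathfrak{f})\subset n\mathfrak{f}$ projects onto $\diag(\mathfrak{f})\subset k\mathfrak{f}$ and onto $\diag(\mathfrak{f})\subset l\mathfrak{f}$, and both ideals $\mathfrak{g}'$, $\mathfrak{g}''$ are $\ad(\mathfrak{h})$-invariant. Hence the restrictions $(\cdot,\cdot)_s$ inherit $\ad(\diag(\mathfrak{f}))$-invariance in each factor, which at the group level is precisely the $\Ad(\diag(F))$-invariance claimed. I do not anticipate any real obstacle here: the entire argument is forced by the disjoint support of basis vectors coming from different diagonal blocks of $A$, and no computation is needed once this observation is in place.
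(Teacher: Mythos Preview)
Your argument is correct and is exactly the natural one: the block-diagonal form of $A$ forces the orthonormal basis $\{f_{ij}\}$ to split according to the ideal decomposition $\mathfrak{g}=\mathfrak{g}'\oplus\mathfrak{g}''$, so the metric is a Riemannian product, and $\ad(\mathfrak{h})$-invariance descends to each factor because $\mathfrak{h}$ projects onto the diagonal in each block. The paper does not actually supply a proof of this lemma---it is stated as a ``simple lemma'' and left to the reader---so there is nothing further to compare; your write-up fills in precisely the details the authors had in mind.
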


\subsection{Matrices generating isometric metrics}\label{matmet}

We will do a further discussion on metrics determined by the above matrices. Before that, we list three kinds of isometries.

\begin{pred}\label{isommat} There are the following isometries between invariant Riemannian metrics determined by different matrices.

{\rm (i)} Two non-degenerate matrices $A$ and $Q\cdot A$ {\rm(}see  \eqref{bases}{\rm)} determine the same metric on $G=F^n$, where $Q$ is an orthogonal matrix.
In particular, one can change the sign in some rows of $A$ or interchange some rows without changing the metric.

{\rm (ii)} If a matrix $B$ is obtained by interchanging two columns in a matrix $A$, then these two matrices generate isometric metrics,
and a suitable isometry generates by interchanging two copies of $F$ in $G=F^n$.

{\rm (iii)} The metrics corresponding to the matrices $A$ and $A \cdot T^k$ are isometric for any $k=1,\dots,n$, where
$T^k=(t^k_{ij})$ is an $(n\times n)$-matrix with $t^k_{kj}=-1$ for any $j=1,\dots,n$, $t^k_{ii}=1$ for any $i\neq k$ and zero in other entries.
\end{pred}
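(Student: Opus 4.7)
The plan is to realize each matrix operation as a geometric transformation that is manifestly an isometry: a change of orthonormal basis for (i), an automorphism of $G$ permuting factors for (ii), and an automorphism of $\widetilde{G}$ swapping the $k$-th and $(n+1)$-th factors (reinterpreted via $\varphi_{n+1}$) for (iii). The main obstacle is (iii), where identifying the right multiplication by $T^k$ with a concrete diffeomorphism of $\widetilde{G}/\widetilde{H}$ requires careful bookkeeping with the formula \eqref{ident}.

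For (i), by \eqref{bases} the rows of $A$ are the coordinates of the $(\cdot,\cdot)$-orthonormal basis $\{f_i(X)\}$ of $L^X$. Replacing $A$ by $QA$ with $Q\in O(n)$ yields the vectors $\tilde f_i=\sum_j Q_{ij}f_j$, which form another orthonormal basis of the same inner product. Extending via \eqref{baseg} produces an orthonormal basis of $(\cdot,\cdot)$ on all of $\mathfrak g$, so the underlying inner product, and hence the Riemannian metric on $G$, is literally unchanged. Sign changes in rows and row interchanges correspond to diagonal $\pm 1$ matrices and permutation matrices, respectively, both of which lie in $O(n)$.

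For (ii), I would take the Lie group automorphism $\sigma_{kl}\colon G=F^n\to G$ exchanging the $k$-th and $l$-th factors. It fixes $H=\diag(F)$ pointwise, so it carries $\Ad(H)$-invariant metrics to $\Ad(H)$-invariant metrics. The differential $d\sigma_{kl}$ sends $f_i(X)=(\alpha_{i1}X,\dots,\alpha_{in}X)$ to the vector with its $k$-th and $l$-th components interchanged, which is exactly the basis vector prescribed by the matrix $B$ obtained from $A$ by swapping columns $k$ and $l$. Since $d\sigma_{kl}$ carries the orthonormal basis for the metric of $A$ onto the orthonormal basis for the metric of $B$, the map $\sigma_{kl}$ is the desired isometry.

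For (iii), I would consider the automorphism $\tau_k\colon\widetilde{G}=F^{n+1}\to\widetilde{G}$ that swaps the $k$-th and $(n+1)$-th factors. It fixes $\widetilde{H}=\diag(F)$ pointwise, so it descends to a diffeomorphism of $\widetilde{G}/\widetilde{H}$ and maps $\Ad(\widetilde{H})$-invariant metrics isometrically to $\Ad(\widetilde{H})$-invariant metrics. The key computation is to verify that under the identification $\varphi_{n+1}$ of \eqref{ident}, the induced action of $d\tau_k$ on $\mathfrak g$ is precisely right-multiplication of row vectors by $T^k$. Given $(\beta_1 X,\dots,\beta_n X)\in\mathfrak g$, its image in $\mathfrak p$ under $\varphi_{n+1}^{-1}$ has $j$-th component $(\beta_j-s)X$ for $j\leq n$ and $(n+1)$-th component $-sX$, where $s=(\beta_1+\cdots+\beta_n)/(n+1)$; after swapping its $k$-th and $(n+1)$-th components and reapplying $\varphi_{n+1}$, the new $j$-th entry (for $j\neq k$) becomes $(\beta_j-s)-(\beta_k-s)=\beta_j-\beta_k$ and the new $k$-th entry becomes $(-s)-(\beta_k-s)=-\beta_k$, which is exactly $(\beta_1,\dots,\beta_n)\cdot T^k$. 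Applying this row by row to $A$ shows that $d\tau_k$ carries the orthonormal basis of the metric parameterized by $A$ onto the orthonormal basis of the metric parameterized by $AT^k$, so $\tau_k$ is an isometry between the two metrics.
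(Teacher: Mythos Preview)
Your proof is correct and follows essentially the same approach as the paper. The paper also dismisses (i) and (ii) as clear and concentrates on (iii), where it introduces the auxiliary modules $\mathfrak{p}_i$ and identifications $\psi_i:\mathfrak{p}_i\to\mathfrak{g}$, then computes $\psi_k\circ\varphi_k\circ\varphi_{n+1}^{-1}\circ\psi_{n+1}^{-1}$; your use of the swap automorphism $\tau_k$ of $\widetilde{G}$ is exactly this map (one checks $\psi_{n+1}\circ\varphi_{n+1}\circ d\tau_k=\psi_k\circ\varphi_k$ on $\mathfrak{p}$), so the two arguments differ only in packaging, yours naming the geometric isometry explicitly.
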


\begin{proof}
The first two assertions are clear, we will prove the third one. Recall that the group $\widetilde{G}=F^{n+1}$ acts transitively on $G=F^n$ by
$$
(a_1,a_2,\dots,a_n,a_{n+1})\circ (x_1,x_2,\dots, x_n)=(a_1x_1a_{n+1}^{-1},\,a_2x_2a_{n+1}^{-1},\,\dots,\, a_nx_na_{n+1}^{-1})
$$
with the isotropy group $\widetilde{H}=\diag(F)\subset \widetilde{G}=F^{n+1}$ at the unit of $G=F^n$,
which identifies $F^n$ with the Ledger~--~Obata space $F^{n+1}/\diag(F)$.
There are several $\Ad(\widetilde{H})$-invariant complements to
$\widetilde{\frak{h}}$ in $\widetilde{\frak{g}}$.
More precisely, by Lemma \ref{strinmod} every irreducible $\Ad(\widetilde{H})$-invariant submodule in $\widetilde{\frak{g}}$ has the form (\ref{irred}):
\begin{equation*}
\left\{(\alpha_1 X, \alpha_2 X,\dots, \alpha_{n+1} X)\subset\widetilde{\frak{g}}\,|\, X \in \frak{f}\right\}
\end{equation*}
for some fixed $\alpha_i$. In particular, we get the isotropy
subalgebra $\widetilde{\frak{h}}$ for
$\alpha_1=\alpha_2=\cdots=\alpha_{n+1}\neq 0$. Recall that
$\frak{p}$ denotes the union of all submodules \eqref{irred} with
$\alpha_1+\alpha_2+\cdots+\alpha_{n+1}=0$, i.~e. $\frak{p}$ is the
$\langle \cdot,\cdot \rangle$-orthogonal complement to
$\widetilde{\frak{h}}$ in $\widetilde{\frak{g}}$. Denote by
$\frak{p}_i$, $i=1,\dots,n+1$, the union of all submodules
\eqref{irred} with $\alpha_i=0$.
\smallskip

Each module $\frak{p}_i$ is naturally identified with $\frak{g}=n\cdot \frak{f}$. In fact, define $\psi_{n+1}:\frak{p}_{n+1} \rightarrow \frak{g}$ by
$$\psi_{n+1}(X_1,X_2,\dots,X_n,0)=(X_1,X_2,\dots,X_n)$$
and $\psi_{i}:\frak{p}_{i} \rightarrow \frak{g}$ for $i\leq n$ by
$$\psi_{i}(X_1,\dots,X_{i-1},0,X_{i+1},\dots,X_n,X_{n+1})=(X_1,\dots,X_{i-1},X_{n+1},X_{i+1},\dots,X_n).$$
This gives the identification between $\mathfrak p_i$ with $\mathfrak g$, and then induces the identification between the set of $\Ad(\widetilde{H})$-invariant metrics on
the Ledger~--~Obata space $\widetilde{G}/\widetilde{H}=F^{n+1}/\diag(F)$ and the set of $\Ad(H)$-invariant metrics on $G=F^n$.
\smallskip

Define the linear isometry $\varphi_i:\frak{p}\rightarrow \frak{p}_i$ by
$$
(\alpha_1X,\alpha_2X,\dots,\alpha_{n+1}X)\mapsto \bigl((\alpha_1-\alpha_i)X,(\alpha_2-\alpha_i)X,\dots,(\alpha_{n+1}-\alpha_i)X \bigr).
$$
It is clear that
$$
\varphi_{n+1}^{-1} \bigl((\alpha_1 X, \alpha_2 X,\dots, \alpha_{n} X,0)\bigr)=
\bigl((\alpha_1-s)X,(\alpha_2-s)X,\dots,(\alpha_{n}-s)X, -sX \bigr),
$$
where $s=(\alpha_1+\alpha_2+\cdots+\alpha_n)/(n+1)$. Hence,
\begin{eqnarray*}
&&\varphi_k\circ\varphi_{n+1}^{-1} \bigl((\alpha_1 X, \alpha_2 X,\dots, \alpha_{n} X,0)\bigr)\\
&=&\bigl((\alpha_1-\alpha_k)X,\dots,(\alpha_{k-1}-\alpha_k)X,0,(\alpha_{k+1}-\alpha_k)X, \dots,(\alpha_{n}-\alpha_k)X, -\alpha_k X \bigr).
\end{eqnarray*}
That is to say, the metrics corresponding to $A$ on $\frak{p}_{n+1}$ and $A \cdot T^k$ on $\frak{p}_{k}$ generate the
same metric on $\frak{p}$, in particular, they are isometric under $\psi_k\circ \varphi_k\circ\varphi_{n+1}^{-1}\circ \psi_{n+1}^{-1}$.
\end{proof}

\subsection{Representation of the standard metric on $F^{n+1}/\diag(F)$}\label{stanmet} Note that there is exactly one
lower triangle matrix $A$ with positive elements in the main diagonal, that represent the Riemannian metric on $F^n$ which is isometric
to the standard metric $\rho_{st}$ on the Ledger~--~Obata space $F^{n+1}/\diag(F)$.
Indeed, by (\ref{ident}) the rows $\eta_1,\dots,\eta_n \in \mathbb{R}^{n+1}$ of the matrix $\psi_{n+1}(A)$ satisfy the equality
$(\eta_i,\eta_j)=\delta_{ij}$ with respect to the canonical inner product on $\mathbb{R}^{n+1}$.
Moreover, $(\eta_i,\eta_0)=0$, where $\eta_0=(1,1,\dots,1)$. By the Gram~--~Schmidt process we easily see that there is exactly one such matrix $A=A(\rho_{st})$.
More precisely, for $i=1,\dots,n$, we have
$$
\eta_i=\frac{1}{\sqrt{(n-i+1)(n-i+2)}}\Bigl(\underbrace{0,\dots,0}_{i-1},\, n-i+1,\,\underbrace{-1,\dots,-1}_{n-i+1} \Bigr)
$$
and the entries $a_{ij}$ of the matrix $A$ are the following:
\begin{equation}\label{prodstand}
a_{ij}=\frac{1}{\sqrt{(n-i+1)(n-i+2)}}\mbox{\,  for  \,}j<i,\quad a_{ii}=\sqrt{\frac{n-i+2}{n-i+1}}\mbox{\, and \,}
a_{ij}=0 \mbox{\, for \,}j>i.
\end{equation}
Note that $\det(A)=\prod_{i=1}^n a_{ii}=\sqrt{n+1}$ and the scalar curvature of the corresponding Riemannian metric is
$S(\rho_{st})=\frac{p}{4}\cdot \widetilde{S}=\frac{p(n+3)n}{4(n+1)}$ and $\Ric(\rho_{st})=\frac{n+3}{4(n+1)} \rho_{st}$.
In particular, if we multiply the matrix $A=A(\rho_{st})$ by $\sqrt{\frac{2(n+1)}{n(n+3)}}$, then we get the metric with
$\widetilde{S}=2$, but if we multiply it by $\sqrt{\frac{4(n+1)}{(n+3)}}$ then we get the matrix $A_n^{st}$, that represent
the Einstein metric with the Einstein constant $1$, see (\ref{scal2}).
In particular,
$A_1^{st}=(2)$,
$A_2^{st}=\sqrt{\frac{6}{5}}\left(\begin{array}{cc}
\sqrt{3} & 0\\
1 & 2\\
\end{array}\right)$, and
$A_3^{st}=\frac{1}{3}\left(\begin{array}{ccc}
4\sqrt{2} & 0 & 0\\
2 & 6 & 0\\
  2\sqrt{3} & 2\sqrt{3} & 4\sqrt{3} \\
\end{array}\right)$.

\section{Einstein metrics on $F^{n+1}/\diag(F)$ for small $n$}\label{section3}

Now we describe invariant Einstein metrics on $F^2/\diag(F)$ and $F^3/\diag(F)$ in a simpler manner than that in \cite{Nik2002}.
\smallskip

If $n=1$ then $A=(\alpha_{11})$, and there is a one-parameter family of invariant (symmetric) metrics with $\widetilde{S}=\alpha_{11}^2$.
\smallskip

If $n=2$, then
$A=\left(%
\begin{array}{cc}
  \alpha_{11} & 0 \\
  \alpha_{21} & \alpha_{22} \\
\end{array}%
\right), $
and
\begin{eqnarray*}
\widetilde{S}&=&\alpha_{11}^2+\alpha_{22}^2-\Lambda_{2,2,1}=\alpha_{11}^2+\alpha_{22}^2-\left(\alpha_{21}^2\beta_{11}-\alpha_{22}^2\beta_{21}\right)^2 \\
&= & \alpha_{11}^2+\alpha_{22}^2-\left(\alpha_{21}^2\alpha_{11}^{-1}-\alpha_{21}\alpha_{22}\alpha_{11}^{-1}\right)^2.
\end{eqnarray*}
Taking $\alpha_{11}=x$, $\alpha_{22}=y$ and $\alpha_{21}=ux$, we get
$$
\widetilde{S}=x^2+y^{2}-(u^2x-uy)^2=x^2+y^{2}-{u^4}{x^2}+2{u^3}{xy}-{u^2}{y^2}.
$$
Thus we need to find all the critical points of this function under the following conditions: $$x,y>0, \quad xy=1, \quad u \in \mathbb{R}.$$
Consider the Lagrange problem: to find the critical points of the function
\begin{eqnarray}
F:=\widetilde{S}-\lambda(xy-1)&=& x^2+y^{2}-{u^4}{x^2}+2{u^3}{xy}-{u^2}{y^2}-\lambda(xy-1)\notag
\end{eqnarray}
for $x,y,u, \lambda \in \mathbb{R}$, $x,y>0$. Then we get
\begin{eqnarray}
&&\frac{\partial{F}}{\partial x} x=\frac{\partial\widetilde{S}}{\partial x}x-\lambda xy=\frac{\partial\widetilde{S}}{\partial x}x-\lambda=0,\notag\\
&&\frac{\partial{F}}{\partial y} y=\frac{\partial\widetilde{S}}{\partial x}y-\lambda xy=\frac{\partial\widetilde{S}}{\partial y}y-\lambda=0, \label{n=2}\\
&&\frac{\partial\widetilde{S}}{\partial u}=0, \quad
xy=1.\notag
\end{eqnarray}
Note that $\widetilde{S}$ is a $2$-form with respect to $x$ and $y$. In particular,
$$2\lambda=\frac{\partial\widetilde{S}}{\partial x}x+\frac{\partial\widetilde{S}}{\partial y}y=2\widetilde{S}.$$
Since the scalar curvature of a compact homogeneous Einstein non-flat manifold should be positive, we know $\lambda>0$.
Since $\widetilde{S}$ is a $2$-form with respect to $x$ and $y$, we may suppose $\lambda=2$ instead of $xy=1$, which is equivalent to
$\widetilde{S}=2$. Hence we get the following system:
\begin{eqnarray}
\frac{\partial\widetilde{S}}{\partial x}x=
\frac{\partial\widetilde{S}}{\partial y}y=2, \quad
\frac{\partial\widetilde{S}}{\partial u}=0. \label{n=2-1}
\end{eqnarray}
Any solution of \eqref{n=2-1} generates a solution of \eqref{n=2} (one only need to multiply $x,y$ by the constant $\mu$ such that $\mu^2xy=1$) and conversely.
Hence it is enough to solve the system \eqref{n=2-1} to get all Einstein metrics up to a homothety.
Note that isometric Einstein metrics with $\widetilde{S}=2$ have the same value $\widetilde{V}=xy$. By $\frac{\partial\widetilde{S}}{\partial u}=0$, we have $$u(ux-y)(2ux-y)=0.$$ It follows that $$u=0,\quad \mbox{ or } ux=y, \quad \mbox{ or } 2ux=y.$$
Furthermore, it is easy to get four solutions $(x,y,u)$:
$$(1,1,0),\quad (1,1,1), \quad \left(\frac{\sqrt{2}}{2},\sqrt{2},1\right), \quad \left(\frac{3}{\sqrt{10}},\sqrt{\frac{6}{5}},\frac{\sqrt{3}}{3}\right).$$
By Proposition~\ref{isommat} and the following facts:
$$\left(%
\begin{array}{cc}
   1 & 0 \\
   0 & -1 \\
\end{array}%
\right)\left(%
\begin{array}{cc}
    1 & 0\\
   1 & 1 \\
\end{array}%
\right)\left(%
\begin{array}{cc}
     1 & 0\\
    -1 & -1 \\
\end{array}%
\right)=\left(%
\begin{array}{cc}
    1 & 0\\
   0 & 1 \\
\end{array}%
\right),$$
$$\left(%
\begin{array}{cc}
   -\frac{\sqrt{2}}{2} & \frac{\sqrt{2}}{2}\\
   \frac{\sqrt{2}}{2} & \frac{\sqrt{2}}{2} \\
\end{array}%
\right)\left(%
\begin{array}{cc}
  \frac{\sqrt{2}}{2} & 0\\
  \frac{\sqrt{2}}{2} & \sqrt{2} \\
\end{array}%
\right)\left(%
\begin{array}{cc}
  0 & 1\\
  1 & 0 \\
\end{array}%
\right)=\left(%
\begin{array}{cc}
    1 & 0\\
    1 & 1 \\
\end{array}%
\right),$$
the first three matrices determine isometric Einstein metrics with $\widetilde{V}=1$. For the fourth one, $$\widetilde{V}=\frac{3\sqrt{3}}{5}\not=1.$$ That is, there are exactly two invariant Einstein metrics on $F^3/\diag(F)$ up to isometry and homothety (\cite{Nik2002}).

\section{Invariant Einstein metrics on $F^4/\diag(F)$}\label{3}
This section is to compute invariant Einstein metrics on $F^4/\diag(F)$. For this case, $n=3$,
$A=\left(%
\begin{array}{ccc}
  \alpha_{11} & 0 & 0\\
  \alpha_{21} & \alpha_{22} & 0\\
  \alpha_{31} & \alpha_{32} & \alpha_{33}\\
\end{array}%
\right)$,
and we get a $6$-parameter family of invariant metrics.
It is easy to get
$$A^{-1}=\left(%
\begin{array}{ccc}
  \alpha_{11}^{-1} & 0 & 0\\
  -\alpha_{21}\bigl(\alpha_{11}\cdot\alpha_{22}\bigr)^{-1} & \alpha_{22}^{-1} & 0\\
  \frac{\alpha_{32}\alpha_{21}-\alpha_{31}\alpha_{22}}{\alpha_{11}\alpha_{22}\alpha_{33}} & -\alpha_{32}\bigl(\alpha_{22}\cdot\alpha_{33}\bigr)^{-1} & \alpha_{33}^{-1}\\
\end{array}%
\right).$$
It follows that
\begin{eqnarray*}
&&\Lambda_{2,2,1}= \left(\alpha_{21}^2\alpha_{11}^{-1}-\alpha_{21}\alpha_{22}\alpha_{11}^{-1}\right)^2, \\
&&\Lambda_{2,3,1}= \Lambda_{3,2,1}=\left(\alpha_{21}\alpha_{31}\alpha_{11}^{-1}-\alpha_{32}\alpha_{21}\alpha_{11}^{-1}\right)^2,\\
&&\Lambda_{3,3,2}=\left(\alpha_{32}^2\alpha_{22}^{-1}-\alpha_{32}\alpha_{33}\alpha_{22}^{-1}\right)^2,\\
&&\Lambda_{3,3,1}=\frac{\bigl(\alpha_{31}^2\alpha_{22}-\alpha_{32}^2\alpha_{21}+\alpha_{33}(\alpha_{32}\alpha_{21}-\alpha_{31}\alpha_{22})\bigr)^2}
{\alpha_{11}^{2}\alpha_{22}^{2}}.
\end{eqnarray*}
Hence, we get
\begin{eqnarray*}
\widetilde{S}=\alpha_{11}^2+\alpha_{22}^2+\alpha_{33}^2
-\left(\alpha_{21}^2\alpha_{11}^{-1}-\alpha_{21}\alpha_{22}\alpha_{11}^{-1}\right)^2-
2\left(\alpha_{21}\alpha_{31}\alpha_{11}^{-1}-\alpha_{32}\alpha_{21}\alpha_{11}^{-1}\right)^2\\
-\left(\alpha_{32}^2\alpha_{22}^{-1}-\alpha_{32}\alpha_{33}\alpha_{22}^{-1}\right)^2
-\frac{\bigl(\alpha_{31}^2\alpha_{22}-\alpha_{32}^2\alpha_{21}+\alpha_{33}(\alpha_{32}\alpha_{21}-\alpha_{31}\alpha_{22})\bigr)^2}
{\alpha_{11}^{2}\alpha_{22}^{2}}.
\end{eqnarray*}
Let $\alpha_{11}=x$, $\alpha_{22}=y$, $\alpha_{33}=z$, $\alpha_{21}=u\cdot x$, $\alpha_{31}=v\cdot x$, and $\alpha_{32}=w\cdot y$. We get
\begin{eqnarray}\label{scal4}
\widetilde{S}=x^2+y^2+z^2
-u^2(u x\!-\!y)^2-2u^2(vx\!-\!wy)^2
-w^2(wy\!-\!z)^2
-\bigl(v^2x\!-\!w^2uy\!+\!(wu\!-\!v)z\bigr)^2.
\end{eqnarray}
The following is to find all the critical points of this function under conditions: $$x,y,z>0, \quad xyz=1, \quad u,v,w \in \mathbb{R}.$$
Consider the Lagrange problem: to find the critical points of the function
\begin{eqnarray}
F:=\widetilde{S}-\lambda(xyz-1)&=& x^2+y^2+z^2-u^2(u x\!-\!y)^2-2u^2(vx\!-\!wy)^2-w^2(wy\!-\!z)^2 \label{scal6} \\
&& -\bigl(v^2x\!-\!w^2uy\!+\!(wu\!-\!v)z\bigr)^2-\lambda(xyz-1)\notag
\end{eqnarray}
for $x,y,z, u,v,w, \lambda \in \mathbb{R}$, $x,y,z >0$. Then we get
\begin{eqnarray}
&&\frac{\partial{F}}{\partial x} x=\frac{\partial\widetilde{S}}{\partial x}x-\lambda xyz=\frac{\partial\widetilde{S}}{\partial x}x-\lambda=0,\notag\\
&&\frac{\partial{F}}{\partial y} y=\frac{\partial\widetilde{S}}{\partial x}y-\lambda xyz=\frac{\partial\widetilde{S}}{\partial y}y-\lambda=0,\notag\\
&&\frac{\partial{F}}{\partial z} z=\frac{\partial\widetilde{S}}{\partial x}z-\lambda xyz=\frac{\partial\widetilde{S}}{\partial z}z-\lambda=0,\label{scal7}\\
&&\frac{\partial\widetilde{S}}{\partial u}=0,\quad \frac{\partial\widetilde{S}}{\partial v}=0,\quad
\frac{\partial\widetilde{S}}{\partial w}=0,\quad
xyz=1.\notag
\end{eqnarray}
Note that $\widetilde{S}$ is a $2$-form with respect to $x$, $y$, $z$. In particular,
$$3\lambda=\frac{\partial\widetilde{S}}{\partial x}x+\frac{\partial\widetilde{S}}{\partial y}y+\frac{\partial\widetilde{S}}{\partial z}z=2\widetilde{S}.$$
Since the scalar curvature of a compact homogeneous Einstein non-flat manifold should be positive, we know $\lambda>0$.
Since $\widetilde{S}$ is a $2$-form with respect to $x$, $y$, $z$, we may suppose $\lambda=2$ instead of $xyz=1$, which is equivalent to
$\widetilde{S}=3$. Hence we get the following system:
\begin{eqnarray}
\frac{\partial\widetilde{S}}{\partial x}x=
\frac{\partial\widetilde{S}}{\partial y}y=
\frac{\partial\widetilde{S}}{\partial z}z=2, \quad
\frac{\partial\widetilde{S}}{\partial u}=\frac{\partial\widetilde{S}}{\partial v}=
\frac{\partial\widetilde{S}}{\partial w}=0. \label{scal8}
\end{eqnarray}
Any solution of \eqref{scal8} generates a solution of \eqref{scal7} (one only need to multiply $x,y,z$ by the constant $\mu$ such that $\mu^3xyz=1$) and conversely.
Hence it is enough to solve the system \eqref{scal8} to get all Einstein metrics up to a homothety. Note that isometrical Einstein metrics with $\widetilde{S}=3$ have the same value $\widetilde{V}=xyz$.
\smallskip

{\small
\renewcommand{\arraystretch}{1.5}
\begin{table}[p]\label{table1}
{\bf Table 1. $\Ad(H)$-invariant Einstein metrics on $F\times F\times F$}
\begin{center}
\begin{tabular}
{|p{0.03\linewidth}|p{0.065\linewidth}|p{0.065\linewidth}|p{0.065\linewidth}|p{0.06\linewidth}|p{0.06\linewidth}|p{0.06\linewidth}|p{0.08\linewidth}
|p{0.08\linewidth}|p{0.08\linewidth}|p{0.08\linewidth}|}
\hline
N&$x\!=\!a_{11}$&$y\!=\!a_{22}$&$z\!=\!a_{33}$&$u$&$v$&$w$&$a_{21}\!=\!ux$&$a_{31}\!=\!vx$ &$a_{32}\!=\!wy$ & $\widetilde{V}\!=\!xyz$\\
\hline \hline
1& 1& 1&1 &0 & 0&0 &0 &0 &0 &1  \\
\hline
2&1 & 1& 1&1 &0 & 0&1 &0 &0 & 1 \\
\hline
3&1 &1 &1 & 0&1 &0 &0 &1 &0 &1  \\
\hline
4&1 &1 & 1&0 & 0& 1&0 &0 &1 &1  \\
\hline
5&1 &1 &1 &0 &1 &1 &0 &1 &1 &1  \\
\hline
6&1 &1 &1 &1 &1 &1 &1 &1 &1 &1  \\
\hline
7&1 &$\frac{1}{\sqrt{2}}$ &$\sqrt{2}$ &0 &0 &1 &0 &0 &$\frac{1}{\sqrt{2}}$ &1  \\
\hline
8&1 &$\frac{1}{\sqrt{2}}$ &$\sqrt{2}$ &0 &$\sqrt{2}$ &1  &0 & $\sqrt{2}$& $\frac{1}{\sqrt{2}}$&1  \\
\hline
9&1 &$\frac{1}{\sqrt{2}}$ &$\sqrt{2}$ &$\frac{1}{\sqrt{2}}$ &$\frac{1}{\sqrt{2}}$ &1 &$\frac{1}{\sqrt{2}}$ &$\frac{1}{\sqrt{2}}$ &$\frac{1}{\sqrt{2}}$ &1  \\
\hline
10&$\frac{1}{\sqrt{3}}$ &$\sqrt{\frac{3}{2}}$ &$\sqrt{2}$ &$\frac{1}{\sqrt{2}}$ &$\sqrt{\frac{3}{2}}$ &$\frac{1}{\sqrt{3}}$ &$\frac{1}{\sqrt{6}}$ &
$\frac{1}{\sqrt{2}}$ &$\frac{1}{\sqrt{2}}$& 1 \\
\hline
11&$\frac{1}{\sqrt{2}}$ &1 &$\sqrt{2}$ &0 &1 &0 &0 &$\frac{1}{\sqrt{2}}$ &0 &1  \\
\hline
12&$\frac{1}{\sqrt{2}}$ &1 &$\sqrt{2}$ &0 &1 &$\sqrt{2}$ &0 &$\frac{1}{\sqrt{2}}$ &$\sqrt{2}$ &1  \\
\hline
13&$\frac{1}{\sqrt{2}}$ &$\sqrt{2}$ &1 &1 &0 &0 &$\frac{1}{\sqrt{2}}$ & 0& 0 & 1 \\
\hline
14&$\frac{1}{\sqrt{2}}$ &$\sqrt{2}$ &1 &1 &$\sqrt{2}$ &$\frac{1}{\sqrt{2}}$ & $\frac{1}{\sqrt{2}}$ &1 &1 &1 \\
\hline
15&$\frac{1}{\sqrt{2}}$ &$\sqrt{\frac{2}{3}}$ &$\sqrt{3}$  &$\frac{1}{\sqrt{3}}$  &$\frac{2\sqrt{6}}{3}$  &$\frac{1}{\sqrt{2}}$ &
$\frac{1}{\sqrt{6}}$ &$\frac{2}{\sqrt{3}}$ &$\frac{1}{\sqrt{3}}$ &1  \\
\hline
16&$\frac{1}{\sqrt{2}}$ &$\sqrt{\frac{2}{3}}$ &$\sqrt{3}$  &$\frac{1}{\sqrt{3}}$  &$\sqrt{\frac{2}{3}}$ &$\sqrt{2}$  &$\frac{1}{\sqrt{6}}$ &
$\frac{1}{\sqrt{3}}$ & $\frac{2}{\sqrt{3}}$&1  \\
\hline
17&$\frac{2\sqrt{2}}{3}$ &1 &$\frac{2}{\sqrt{3}}$ &$\frac{1}{2\sqrt{2}}$ &$\frac{\sqrt{6}}{4}$ & $\frac{1}{\sqrt{3}}$&$\frac{1}{3}$ &
$\frac{1}{\sqrt{3}}$ &$\frac{1}{\sqrt{3}}$ &$\frac{4\sqrt{6}}{9}$   \\
\hline
18&1 &$\frac{3}{\sqrt{10}}$ &$\sqrt{\frac{6}{5}}$ &0 &0 & $\frac{1}{\sqrt{3}}$&0 &0 &$\sqrt{\frac{3}{10}}$ & $\frac{3\sqrt{3}}{5}$  \\
\hline
19&1 &$\frac{3}{\sqrt{10}}$ &$\sqrt{\frac{6}{5}}$ &0 &$\sqrt{\frac{6}{5}}$ & $\frac{1}{\sqrt{3}}$&0 &$\sqrt{\frac{6}{5}}$ &
$\sqrt{\frac{3}{10}}$ & $\frac{3\sqrt{3}}{5}$ \\
\hline
20&1 &$\frac{3}{\sqrt{10}}$ &$\sqrt{\frac{6}{5}}$ &$\frac{3}{\sqrt{10}}$ &$\sqrt{\frac{3}{10}}$& $\frac{1}{\sqrt{3}}$ &$\frac{3}{\sqrt{10}}$ &
$\sqrt{\frac{3}{10}}$&$\sqrt{\frac{3}{10}}$ & $\frac{3\sqrt{3}}{5}$ \\
\hline
21&$\frac{3}{\sqrt{19}}$ &$\sqrt{\frac{19}{10}}$ &$\sqrt{\frac{6}{5}}$ &$\frac{3}{\sqrt{10}}$& $\sqrt{\frac{19}{30}}$ &$\sqrt{\frac{3}{19}}$&
$\frac{9}{\sqrt{190}}$ &$\sqrt{\frac{3}{10}}$ &$\sqrt{\frac{3}{10}}$ &
$\frac{3\sqrt{3}}{5}$ \\
\hline
22&$\frac{3}{\sqrt{19}}$ &$\sqrt{\frac{57}{55}}$ &$\sqrt{\frac{11}{5}}$ &$\sqrt{\frac{5}{33}}$ &$2\sqrt{\frac{19}{55}}$ &$\sqrt{\frac{3}{19}}$ &
$\sqrt{\frac{15}{209}}$ &$\frac{6}{\sqrt{55}}$ &$\frac{3}{\sqrt{55}}$ &$\frac{3\sqrt{3}}{5}$ \\
\hline
23&$\frac{3}{\sqrt{19}}$ &$\sqrt{\frac{57}{55}}$ &$\sqrt{\frac{11}{5}}$ &$\sqrt{\frac{5}{33}}$ &$\frac{1}{3}\sqrt{\frac{95}{11}}$ &$\frac{8}{\sqrt{57}}$ &
$\sqrt{\frac{15}{209}}$ &$\sqrt{\frac{5}{11}}$ &$\frac{8}{\sqrt{55}}$ &$\frac{3\sqrt{3}}{5}$ \\
\hline
24&$\frac{3}{\sqrt{10}}$ &1 &$\sqrt{\frac{6}{5}}$ &0 &$\frac{1}{\sqrt{3}}$ &0 &0 &$\sqrt{\frac{3}{10}}$ &0 & $\frac{3\sqrt{3}}{5}$ \\
\hline
25&$\frac{3}{\sqrt{10}}$ &1 &$\sqrt{\frac{6}{5}}$ &0 &$\frac{1}{\sqrt{3}}$ &$\sqrt{\frac{6}{5}}$ &0 &$\sqrt{\frac{3}{10}}$ &$\sqrt{\frac{6}{5}}$ &
 $\frac{3\sqrt{3}}{5}$ \\
\hline
26&$\frac{3}{\sqrt{10}}$ &$\sqrt{\frac{6}{11}}$ &$\sqrt{\frac{11}{5}}$ &$\sqrt{\frac{5}{33}}$ &$\frac{8\sqrt{22}}{33}$&$\sqrt{\frac{5}{6}}$&
$\sqrt{\frac{3}{22}}$ &$\frac{8}{\sqrt{55}}$ &$\sqrt{\frac{5}{11}}$ &$\frac{3\sqrt{3}}{5}$ \\
\hline
27&$\frac{3}{\sqrt{10}}$ &$\sqrt{\frac{6}{11}}$ &$\sqrt{\frac{11}{5}}$ &$\sqrt{\frac{5}{33}}$ &$\sqrt{\frac{2}{11}}$ &$\sqrt{\frac{6}{5}}$&
$\sqrt{\frac{3}{22}}$  &$\frac{3}{\sqrt{55}}$ &$\frac{6}{\sqrt{55}}$ & $\frac{3\sqrt{3}}{5}$ \\
\hline
28&$\frac{3}{\sqrt{10}}$ &$\sqrt{\frac{6}{5}}$ &1 &$\frac{1}{\sqrt{3}}$ &0 &0 &$\sqrt{\frac{3}{10}}$ &0 &0 & $\frac{3\sqrt{3}}{5}$ \\
\hline
29&$\frac{3}{\sqrt{10}}$ &$\sqrt{\frac{6}{5}}$ &1 &$\frac{1}{\sqrt{3}}$ &$\frac{\sqrt{10}}{3}$ &$\sqrt{\frac{5}{6}}$ &
$\sqrt{\frac{3}{10}}$ &1 &1 & $\frac{3\sqrt{3}}{5}$ \\
\hline
\end{tabular}
\end{center}
\end{table}
}

Consider the polynomial ideal $J$ generated by the polynomials
$$
\frac{\partial\widetilde{S}}{\partial x}x-2,\quad
\frac{\partial\widetilde{S}}{\partial y}y-2,\quad
\frac{\partial\widetilde{S}}{\partial z}z-2,\quad
\frac{\partial\widetilde{S}}{\partial u},\quad
\frac{\partial\widetilde{S}}{\partial v},\quad
\frac{\partial\widetilde{S}}{\partial w}.
$$
These polynomials depend on the variables $x,y,z,u,v,w$, i.e. $J \subset \mathbb{K}[x,y,z,u,v,w]$.
Using the command \texttt{EliminationIdeal} (that eliminates variables from an ideal using a Gr\"{o}bner basis computation) from Maple,
we easily get the eliminated ideals
$$
J_1:=J\cap \mathbb{K}[x],\quad J_2:=J\cap \mathbb{K}[x,y], \quad J_3:=J\cap \mathbb{K}[x,y,z].
$$
Note that $J_1$ generates by the polynomial
$$
f(x)=(x^2-1)(2x^2-1)(3x^2-1)(9x^2-8)^2(10x^2-9)(19x^2-9)(85x^4-144x^2+64).
$$
The equation $f(x)=0$ has 6 different positive roots $x_1,\dots,x_6$. Taking $x_i$ and replacing $x$ by $x_i$ in $J_2$, we get an ideal generating by a polynomial $f_i(y)$ only depending on $y$. Solve the equation $f_i(y)=0$ and find every pair $(x>0,y>0)$ which lies in the zero set of $J_2$.
We do this for $i=1,\dots,6$ and finally have 13 such pairs. Replacing $(x,y)$ by every pair in $J_3$, we easily obtain all suitable $z$.
Note that there is exactly one suitable $z>0$ for every pair. Hence we get 13 triples $(x>0, y>0,z>0)$ which lie in the zero set of $J_3$. Replacing $(x,y,z)$ by every triple in $J$, we get a polynomial system in the variables $u$, $v$, and $w$, and then we get the values of $u$, $v$, and $w$. Here we only list the results (together with the corresponding values $a_{ij}$) in Table 1.

\section{The proof of Theorem~\ref{thm1}}
Let $A_i$ denote the matrix determined by the $i$-th row of Table 1 and let $(\cdot,\cdot)_i$ be the Einstein metric generated by $A_i$. Since all metrics in Table 1 have the same scalar curvature, but there are three different values for the volume. Thus if $(\cdot,\cdot)_i$ is isometric to $(\cdot,\cdot)_j$, then $1\leq i,j\leq 16$ or $18\leq i,j\leq 29$.
\smallskip

The following is to study the isometry between $(\cdot,\cdot)_i$ and $(\cdot,\cdot)_j$ when $1\leq i,j\leq 16$ or $18\leq i,j\leq 29$. Firstly, $(\cdot,\cdot)_1$ is isometric to $(\cdot,\cdot)_2$ by Proposition~\ref{isommat} and the discussion for $n=2$ in section~\ref{section3}. Secondly, $(\cdot,\cdot)_2$ is isometric to $(\cdot,\cdot)_3$ by Proposition~\ref{isommat} and
the following fact
$$\left(%
\begin{array}{ccc}
   1 & 0 & 0\\
  0 & 0 & 1\\
  0 & 1 & 0 \\
\end{array}%
\right)A_3\left(%
\begin{array}{ccc}
   1 & 0 & 0\\
  0 & 0 & 1\\
  0 & 1 & 0 \\
\end{array}%
\right)=\left(%
\begin{array}{ccc}
   1 & 0 & 0\\
  1 & 1 & 0\\
  0 & 0 & 1 \\
\end{array}%
\right)=A_2.$$
By the same way (just interchanging the rows and columns of the matrices), we have the following pairs and triples which determine isometric metrics: $$(A_2,A_{3},A_{4}), \quad (A_7,A_{11},A_{13}), \quad (A_8,A_{12}),\quad (A_{18},A_{24},A_{28}), \quad (A_{19},A_{25}).$$
We also know that $(\cdot,\cdot)_5$ is isometric to $(\cdot,\cdot)_1$ by Proposition~\ref{isommat} and the following fact
$$\left(%
\begin{array}{ccc}
   1 & 0 & 0\\
  0 & 1 & 0\\
  0 & 0 & -1 \\
\end{array}%
\right)A_5\left(%
\begin{array}{ccc}
   1 & 0 & 0\\
  0 & 1 & 0\\
  -1 & -1 & -1 \\
\end{array}%
\right)=\left(%
\begin{array}{ccc}
   1 & 0 & 0\\
  0 & 1 & 0\\
  0 & 0 & 1 \\
\end{array}%
\right)=A_1.$$
By the same way, we have the following pairs which determine isometric metrics:
$$(A_2,A_{6}), \quad (A_{13},A_{14}), \quad (A_{18},A_{19}), \quad (A_{28},A_{29}).$$
Finally, for the other cases, we have:
\begin{enumerate}
  \item  let $O_1=\left(%
\begin{array}{ccc}
   1 & 0 & 0\\
  0 & -\frac{\sqrt{2}}{2} & \frac{\sqrt{2}}{2}\\
  0 & \frac{\sqrt{2}}{2} & \frac{\sqrt{2}}{2} \\
\end{array}%
\right)$ and $O_2=\left(%
\begin{array}{ccc}
   1 & 0 & 0\\
  0 & 0 & 1\\
  0 & 1 & 0 \\
\end{array}%
\right)$, it is easy to check that
$$O_1A_7O_2=A_4,\quad O_1A_8O_2=A_6, \quad O_1A_9O_2=A_5.$$
\item
$\left(%
\begin{array}{ccc}
    -\frac{1}{\sqrt{3}} & \sqrt{\frac{2}{3}} & 0\\
    \sqrt{\frac{2}{3}} & \frac{1}{\sqrt{3}}  & 0 \\
    0 & 0& 1 \\
\end{array}%
\right)A_{10}\left(%
\begin{array}{ccc}
   0 & 1 & 0\\
  1 & 0 & 0 \\
  0 & 0 & 1 \\
\end{array}%
\right)=A_{9}.$
\item
$\left(%
\begin{array}{ccc}
   1 & 0 & 0\\
  0 & -\frac{1}{\sqrt{3}} & \sqrt{\frac{2}{3}}\\
  0 & \sqrt{\frac{2}{3}} & \frac{1}{\sqrt{3}} \\
\end{array}%
\right)A_{15}O_2=A_{14}, \left(%
\begin{array}{ccc}
   1 & 0 & 0\\
  0 & -\sqrt{\frac{2}{3}}& \frac{1}{\sqrt{3}}\\
  0 & \frac{1}{\sqrt{3}}& \sqrt{\frac{2}{3}} \\
\end{array}%
\right)A_{16}O_2=A_{12}.$
\item
$\left(%
\begin{array}{ccc}
    1  & 0 & 0\\
    0 & -\frac{1}{2}  &  -\frac{\sqrt{3}}{2} \\
    0 & -\frac{\sqrt{3}}{2} & \frac{1}{2} \\
\end{array}%
\right)A_{20}\left(%
\begin{array}{ccc}
   1 & 0 & 0\\
  -1 & -1 & -1 \\
  0 & 0 & 1 \\
\end{array}%
\right)=\left(%
\begin{array}{ccc}
   1 & 0 & 0\\
  0 & \frac{3}{\sqrt{10}} & 0 \\
  0 & \sqrt{\frac{3}{10}} & \sqrt{\frac{6}{5}} \\
\end{array}%
\right)=A_{18}.$
\item
$\left(%
\begin{array}{ccc}
    -\frac{3}{\sqrt{19}}  & \sqrt{\frac{10}{19}} & 0\\
    \sqrt{\frac{10}{19}} & \frac{3}{\sqrt{19}}  &  0 \\
    0 & 0 & 1 \\
\end{array}%
\right)A_{21}\left(%
\begin{array}{ccc}
   0 & 1 & 0\\
  1 & 0& 0 \\
  0 & 0 & 1 \\
\end{array}%
\right)=A_{20}.$
\item
$\left(%
\begin{array}{ccc}
    -\sqrt{\frac{5}{38}}  & \sqrt{\frac{33}{38}} & 0\\
    \sqrt{\frac{33}{38}} & \sqrt{\frac{5}{38}}  &  0 \\
    0 & 0 & 1 \\
\end{array}%
\right)A_{22}\left(%
\begin{array}{ccc}
   0 & 1 & 0\\
  1 & 0& 0 \\
  0 & 0 & 1 \\
\end{array}%
\right)=\left(%
\begin{array}{ccc}
   \frac{3}{\sqrt{10}} & 0 & 0\\
  \sqrt{\frac{3}{22}} & \sqrt{\frac{6}{11}} & 0\\
   \frac{3}{\sqrt{55}} & \frac{6}{\sqrt{55}} &  \sqrt{\frac{11}{5}} \\
\end{array}%
\right)=A_{27}.$
\item
$\left(%
\begin{array}{ccc}
    -\sqrt{\frac{5}{38}}  & \sqrt{\frac{33}{38}} & 0\\
    \sqrt{\frac{33}{38}} & \sqrt{\frac{5}{38}}  &  0 \\
    0 & 0 & 1 \\
\end{array}%
\right)A_{23}\left(%
\begin{array}{ccc}
   0 & 1 & 0\\
  1 & 0& 0 \\
  0 & 0 & 1 \\
\end{array}%
\right)=\left(%
\begin{array}{ccc}
   \frac{3}{\sqrt{10}} & 0 & 0\\
  \sqrt{\frac{3}{22}} & \sqrt{\frac{6}{11}} & 0\\
   \frac{8}{\sqrt{55}} & \sqrt{\frac{5}{11}} & \sqrt{\frac{11}{5}} \\
\end{array}%
\right)=A_{26}.$
\item
$\left(%
\begin{array}{ccc}
    1  & 0 & 0\\
    0 & -\sqrt{\frac{5}{11}}  &  \sqrt{\frac{6}{11}} \\
    0 & \sqrt{\frac{6}{11}} & \sqrt{\frac{5}{11}} \\
\end{array}%
\right)A_{26}O_2=\left(%
\begin{array}{ccc}
   \frac{3}{\sqrt{10}} & 0 & 0\\
  \sqrt{\frac{3}{10}} & \sqrt{\frac{6}{5}} & 0\\
  1 & 1 & 1 \\
\end{array}%
\right)=A_{29}.$
\item
$\left(%
\begin{array}{ccc}
    1  & 0 & 0\\
    0 & -\sqrt{\frac{5}{11}}  &  -\sqrt{\frac{6}{11}} \\
    0 & -\sqrt{\frac{6}{11}} & \sqrt{\frac{5}{11}} \\
\end{array}%
\right)A_{27}\left(%
\begin{array}{ccc}
   1 & 0 & 0\\
  -1 & -1 & -1\\
  0 & 0 & 1 \\
\end{array}%
\right)=\left(%
\begin{array}{ccc}
   \frac{3}{\sqrt{10}} & 0 & 0\\
  \sqrt{\frac{3}{10}} & \sqrt{\frac{6}{5}} & 0\\
  0 & 0 & 1 \\
\end{array}%
\right)=A_{28}.$
\end{enumerate}
Summing up, we know that $(\cdot,\cdot)_i$ is isometric to $(\cdot,\cdot)_j$ if $1\leq i,j\leq 16$ or $18\leq i,j\leq 29$, which implies Theorem~\ref{thm1}.

\begin{remark}
It is easy to see that $(\cdot,\cdot)_1$ is the standard metric on $F^3$ and $(\cdot,\cdot)_{17}$ is proportional to the standard metric on $F^4/\diag(F)$.
\end{remark}

\section{On the number of invariant Einstein metrics on $F^{n+1}/\diag(F)$}
In this section we discuss the number of invariant Einstein metrics on general Ledger~--~Obata spaces $F^{n+1}/\diag(F)$. We also consider the following related problem: the number of critical points of scalar curvature functions (restricted to the metrics of fixed volume) on these spaces.
\smallskip

Firstly we produce Einstein metrics on $F^{n+1}/\diag(F)$ using standard metrics $\rho_{st}$ of $F^m/\diag(F)$ for various natural numbers $m$. The main idea is very simple: {\it The direct product of two Einstein manifolds with the Einstein constant $\lambda$
is also an Einstein manifold with the Einstein constant $\lambda$}.
Assume that $\lambda=1$ for simplicity. The discussion in subsection \ref{stanmet} shows that the matrix $A_n^{st}$ determines an invariant Einstein metric on the group $F^n$, that has the Einstein constant $1$ and is isometric to a multiple of the standard metric $\rho_{st}$ on $F^{n+1}/\diag(F)$.
\smallskip

For any (integer) partition of a positive integer $n$, that is the way of writing $n$ as a sum of positive integers,
(two sums that differ only in the order of their summands are considered as the same partition): $n=k_1+k_2+\cdots+ k_l$, define an $(n\times n)$-matrix
\begin{equation}\label{matcomp}
A=\diag \Bigl(A_{k_1}^{st},A_{k_2}^{st},\cdots, A_{k_l}^{st}\Bigr).
\end{equation}
This matrix generates an invariant Einstein metric on $F^n$ (see the previous sections), that is a direct product of standard metrics on the spaces
$F^{k_i+1}/\diag(F)$ up to a positive multiple.
\smallskip

An invariant Einstein metric on $F^{n+1}/\diag(F)$ (and on $F^n$) is called {\it routine} if it is isometric or homothetic to some metric generated by (\ref{matcomp}).
Recall that the latter are Einstein metrics with the Ricci constant~$1$ that are the direct products of some normal metrics on the spaces $F^m/\diag(F)$.
Clearly, every invariant Einstein metric on $F^{n+1}/\diag(F)$ is routine for $n=1,2,3$.
\smallskip

\begin{ques}\label{q1} Is there a non-routine invariant Einstein metrics on the Ledger~--~Obata spaces $F^{n+1}/\diag(F)$?
\end{ques}

Note, that every routine Einstein metric is normal, hence, naturally reductive and geodesic orbit in particular
(see discussion on these classes of Riemannian manifolds e.g. in \cite{Bes,NRS}). Hence, the following question is natural.

\begin{ques}\label{q2}
Is there an invariant Einstein metric on the Ledger~--~Obata space $F^{n+1}/\diag(F)$ that is not a geodesic orbit Riemannian metric?
\end{ques}

Now we give {\it the proof of theorem~\ref{thm2}}.
Since every Riemannian manifold $(F^m/\diag(F),\rho_{st})$ is irreducible, by the de~Rham theorem, we know that different partitions of $n$
($n=k_1+k_2+\cdots +k_l$)
give nonisometric Einstein metrics generated by  (\ref{matcomp}).
Hence, the number of invariant Einstein routine metrics on $F^{n+1}/\diag(F)$ is not less than $p(n)$, the number
of integer partitions of $n$, that proves the first assertion of Theorem~\ref{thm2}.
\smallskip

For details on  the number of integer partitions we refer to the book \cite{And1976}.
We have no general formula for $p(n)$ but some of its value are the following (see details in \cite{And1976}):
\begin{eqnarray*}
 && p(1)=1, \quad p(2)=2, \quad p(3)=3,\quad  p(4)=5,\quad  p(5)=7,\quad  p(6)=11, \\
 &&  p(50)=204226,\quad  p(100)=190569292,\quad  p(200)=3972999029388.
\end{eqnarray*}
We have also the Hardy~--~Ramanujan asymptotic formula:
\begin{equation}\label{hrfor}
p(n) \sim \frac {1} {4n\sqrt3} \exp\left({\pi \sqrt {\frac{2n}{3}}}\right)\quad \mbox { as } \quad n\rightarrow \infty,
\end{equation}
see details in \cite{And1976,HarRam}. There are lower bounds for sufficiently large $n$ in \cite{Er1942,Ne1951}. It is proved by A.~Mar{\'{o}}ti in \cite{Ma2003}
that there is a lower bound
\begin{equation}\label{estimeq}
p(n)> \frac{1}{13n}\exp\left({\frac{5}{2}}\sqrt{n}\right) \quad \mbox{ for any }\quad n\geq 1.
\end{equation}
This discussion completes the proof of Theorem \ref{thm2}.
\smallskip

Let $E(n)$ denote the number of invariant Einstein metrics on $F^{n+1}/\diag(F)$ and let $CPS(n)$ denote
the number of critical points of the scalar curvature function (for metrics of a fixed volume) for $F^{n+1}/\diag(F)$.
The above results show that $E(1)=1$, $E(2)=2$, $E(3)=3$, whereas $CPS(1)=1$, $CPS(2)=4$, and $CPS(3)=29$.
Because of isometries, it is clear that $CPS(n)$ increases much faster than $E(n)$.
\smallskip

Let $REM(n)$ be the number of critical points (up to a homothety) of the scalar curvature function that are routine Einstein metrics
on the Ledger~--~Obata space $F^{n+1}/\diag(F)$. Obviously we get $CPS(n)\geq REM(n)$, and $REM(n)=CPS(n)$ for $n=1,2,3$.
By (\ref{estimeq}) we obviously get
$$REM(n)\geq E(n)>\frac{1}{13n}\exp\left({\frac{5}{2}}\sqrt{n}\right).$$

For any $n$, consider the decomposition $n=k_1+k_2+\cdots +k_l$ where every integer $k_i\geq 1$, and the routine Einstein
metric generated by the matrix
$$
A=\diag \Bigl(A_{k_1}^{st},A_{k_2}^{st},\cdots, A_{k_l}^{st}\Bigr).
$$
Clearly, the number of such decompositions is $2^{n-1}$. In fact, there is a one-to-one correspondence between the above decompositions and the subsets of signs ``+'' in the string $1+1+1+\cdots+1$ with $n$ summands. Moreover different decompositions determine distinct critical points
(even the corresponding Einstein metrics could be isometric). Hence, we easily get $$REM(n) \geq 2^{n-1}.$$

In the following we will give a stronger estimate. Firstly we recall the procedure of generating left-invariant Riemannian metrics on the Lie group $F^n$ by matrices, see subsection \ref{matmet}.

\begin{lem}\label{counme}
Let $A=(a_{ij})$ and $B=(b_{ij})$ be lower triangle non-degenerate $(n\times n)$-matrices such that $a_{nn}=b_{nn}$, $b_{ij}=a_{ij}$ for $i<n$, $b_{nj}=a_{nn}-a_{nj}$ for any $j<n-1$. Then $A$ and $B$ generate isometric metrics. Moreover, $B=A$ if and only if $2a_{nj}=a_{nn}$ for any $j<n$.
\end{lem}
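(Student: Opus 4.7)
The plan is to obtain $B$ from $A$ by composing two metric-preserving transformations from Proposition~\ref{isommat}: right multiplication by $T^n$ (part (iii) with $k = n$) followed by left multiplication by an orthogonal matrix (part (i)). It then suffices to check that the composition reproduces $B$ entry by entry.

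First, form $C := A \cdot T^n$. Since $A$ is lower triangular, $a_{in} = 0$ for $i < n$ and only $a_{nn}$ contributes from column $n$; direct computation gives that the $n$-th column of $C$ is $(0, \ldots, 0, -a_{nn})^\top$, and for $j < n$ one has $C_{ij} = a_{ij} - a_{in}$, which equals $a_{ij}$ for $i < n$ and equals $a_{nj} - a_{nn}$ for $i = n$. By Proposition~\ref{isommat}(iii), $C$ generates the same invariant metric as $A$.

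Second, apply $Q := \diag(1, \ldots, 1, -1)$ on the left. By Proposition~\ref{isommat}(i), $QC$ generates the same metric as $C$, hence as $A$. Explicitly, $QC$ is lower triangular with positive diagonal $a_{11}, \ldots, a_{n-1,n-1}, a_{nn}$, agrees with $A$ in rows $1$ through $n-1$, and has last row $(QC)_{nj} = a_{nn} - a_{nj}$ for $j < n$. This matches the description of $B$, so $QC = B$ and the two matrices generate isometric metrics.

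For the ``moreover'' assertion, rows $1, \ldots, n-1$ of $A$ and $B$ coincide by hypothesis and $b_{nn} = a_{nn}$, so $B = A$ is equivalent to $a_{nj} = a_{nn} - a_{nj}$, i.e.\ $2a_{nj} = a_{nn}$, for each $j < n$. There is no real obstacle in the argument; the only care needed is in the bookkeeping of signs when forming $A \cdot T^n$ and then left-multiplying by $Q$ to restore the sign of the $(n,n)$-entry and thereby return the product to the canonical lower-triangular-with-positive-diagonal form.
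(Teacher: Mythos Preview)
Your proof is correct and follows exactly the same route as the paper: you show $B=\diag(1,\dots,1,-1)\cdot A\cdot T^n$ and then invoke parts (i) and (iii) of Proposition~\ref{isommat}. The paper's proof is simply the one-line version of what you wrote out entry by entry.
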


\begin{proof}
It is easy to see that $B=\diag(1,1,\dots, 1, -1)\cdot A \cdot T^{n}$, where
$T^n=(t^n_{ij})$ is an $(n\times n)$-matrix with $t^n_{nj}=-1$ for any $j=1,\dots,n$, $t^n_{ii}=1$ for any $i<n$ and zero in other entries.
By Proposition \ref{isommat} we get the first assertion of the lemma.
 The second assertion is obvious.
\end{proof}

If we denote the matrix $B$ in Lemma \ref{counme} by $\widehat{A}$, then we get the map $A \mapsto \widehat{A}$ such that $\widehat{\widehat{A}\,}=A$.
Now we will prove the following proposition.

\begin{pred}\label{numbcp} The inequality $CPS(n) \geq REM(n) \geq \bigl(1+\sqrt{2}\bigr)^{n-1}$ holds for any $n\geq 1$.
\end{pred}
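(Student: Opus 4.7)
The plan is to establish, for $n \geq 3$, the Pell-type recurrence
\[
REM(n) \,\geq\, 2\,REM(n-1) + REM(n-2),
\]
together with the base cases $REM(1)=1$ and $REM(2) \geq 3$. Since $1+\sqrt{2}$ is the positive root of $x^2 = 2x+1$, i.e.\ $(1+\sqrt{2})^{n-1} = 2(1+\sqrt{2})^{n-2} + (1+\sqrt{2})^{n-3}$, this recurrence combined with $1 \geq (1+\sqrt{2})^0$ and $3 > 1+\sqrt{2}$ yields $REM(n) \geq (1+\sqrt{2})^{n-1}$ by induction, and the inequality $CPS(n) \geq REM(n)$ is tautological.

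For the base cases, $F^2/\diag(F)$ is an irreducible symmetric space and admits a unique invariant metric up to homothety, so $REM(1)=1$. For $n=2$, the three lower-triangular matrices $A_2^{st}$, $\diag(A_1^{st},A_1^{st})$, and $\widehat{\diag(A_1^{st},A_1^{st})}$ (the latter obtained from Lemma~\ref{counme} by changing the last row from $(0,2)$ to $(2,2)$) each represent a routine Einstein metric and are visibly distinct, so $REM(2) \geq 3$.

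For the inductive step I produce three families of $n\times n$ matrices in $REM(n)$. Given $M \in REM(n-1)$, put (a) $\diag(M, A_1^{st})$ and (b) $\widehat{\diag(M, A_1^{st})}$. By Lemma~\ref{prod1} both represent the direct product of the routine Einstein metric determined by $M$ with the standard metric on $F^2/\diag(F)$ rescaled to Einstein constant $1$, so both are routine, and Lemma~\ref{counme} guarantees that they are distinct matrices representing the same metric. Given $M \in REM(n-2)$, put (c) $\diag(M, A_2^{st})$, which represents the product of $M$'s metric with $\rho_{st}$ of $F^3/\diag(F)$ and is again routine. Altogether this yields $2\,REM(n-1)+REM(n-2)$ candidate matrices.

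The main obstacle is to verify pairwise distinctness of the members of these three families. Writing $c_1 := a_{11}^{A_1^{st}} = 2$ and $c_2 := a_{22}^{A_2^{st}} = 2\sqrt{6/5}$, the last row of a matrix from family (a) has the form $(0,\ldots,0,c_1)$, from family (b) the form $(c_1,c_1,\ldots,c_1)$, and from family (c) the form $(0,\ldots,0,c_2/2,c_2)$. Since $c_1 \neq c_2$ and the three patterns are manifestly different, matrices from distinct families cannot coincide. Within a single family, the upper-left sub-block of size $n-1$ (for (a) and (b)) or $n-2$ (for (c)) coincides with $M$, since the involution $\widehat{\cdot}$ only alters row $n$; hence different choices of $M$ produce different matrices. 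This verifies the Pell recurrence and completes the induction.
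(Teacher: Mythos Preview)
Your proof is correct and takes a genuinely different route from the paper's argument. The paper proceeds by a direct combinatorial count: for each ordered composition $n=k_1+\cdots+k_l$ it groups consecutive blocks $A_{k_i}^{st}$ in pairs, applies the involution of Lemma~\ref{counme} independently to each of the $m=[l/2]$ resulting blocks, and thus obtains $2^{m}$ distinct lower-triangular matrices. Summing over compositions and using $2^{[l/2]}\ge 2^{(l-1)/2}$ yields $\sum_{l} \binom{n-1}{l-1} 2^{(l-1)/2}=(1+\sqrt{2})^{n-1}$. Your approach instead recognises $1+\sqrt{2}$ as the dominant root of the Pell recurrence $x^2=2x+1$, and manufactures three disjoint families of routine critical points on $F^n$ (via $\diag(M,A_1^{st})$, its hat, and $\diag(M,A_2^{st})$) that witness $REM(n)\ge 2\,REM(n-1)+REM(n-2)$. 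The recursive argument is arguably cleaner: it ties the bound directly to its characteristic equation without the intermediate pairing of blocks or the slack introduced by replacing $[l/2]$ with $(l-1)/2$. The paper's argument, on the other hand, is more explicit about the full collection of matrices being counted, which may be preferable if one wants to sharpen the constant. Your distinctness check via the last row pattern $(0,\dots,0,2)$, $(2,\dots,2)$, $(0,\dots,0,\sqrt{6/5},2\sqrt{6/5})$ is a nice touch and is exactly what the paper's argument leaves implicit. One cosmetic point: writing ``$M\in REM(n-1)$'' is an abuse of notation (since $REM(n-1)$ is a number); you should say ``for each of the $REM(n-1)$ lower-triangular matrices $M$ with Einstein constant~$1$ representing a routine metric on $F^{n-1}$''.
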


\begin{proof}
For any $n$, consider the decomposition $n=k_1+k_2+\cdots +k_l$ where every integer $k_i\geq 1$, and the routine Einstein
metric generated by the matrix
$$
A=\diag \Bigl(A_{k_1}^{st},A_{k_2}^{st},\cdots, A_{k_l}^{st}\Bigr).
$$

Consider another decomposition $n=s_1+s_2+\cdots +s_m$, where
$s_1=k_1+k_2$, $s_2=k_3+k_4,\dots$, $s_m=k_{\,l-2}+k_{\,l-1}+k_l$ if $l$ is odd and $s_m=k_{\,l-1}+k_l$ if $l$ is even. Note that $m=[l/2]$,
where $[x]$
means the integer part of the number $x$. Hence we have the following matrix $A=\diag \Bigl(A_1, A_2,\cdots, A_m \Bigr)$, where $A_1=\diag \Bigl(A_{k_1}^{st},A_{k_2}^{st}\Bigr)$, $A_2=\diag \Bigl(A_{k_3}^{st},A_{k_4}^{st}\Bigr)$, $\dots$,
$A_m=\diag \Bigl( A_{k_{\,l-2}}^{st}, A_{k_{\,l-1}}^{st}, A_{k_{\,l}}^{st}  \Bigr)$ if $l$ is odd and $A_m=\diag \Bigl(A_{k_{\,l-1}}^{st}, A_{k_{\,l}}^{st}  \Bigr)$ if $l$ is even.
\smallskip

Every matrix $A$ determines a routine Einstein metric, and every matrix $\diag \Bigl(B_1, B_2,\cdots, B_m \Bigr)$,
where each $B_i$ is either $A_i$ or $\widehat{A_i}$, also determines a routine Einstein metric. Since $\widehat{A_i}\neq A_i$ for every $i$, we obtain
$2^m$ routine Einstein metrics generated by a decomposition $n=k_1+k_2+\cdots +k_l$. The number of decompositions with $l$ natural summands is equal to $C_{n-1}^{l-1}$. Therefore, we get
$$
REM(n)\geq \sum_{l=1}^{n-1} C_{n-1}^{l-1} 2^{\,[l/2]}\geq \sum_{l=1}^{n-1} C_{n-1}^{l-1} 2^{\,(l-1)/2}=(1+2^{\,1/2})^{n-1}=\bigl(1+\sqrt{2}\bigr)^{n-1},
$$
which ends the proposition.
\end{proof}
\smallskip

It is clear that the estimate is not exact. Then we have the following interesting question.

\begin{ques}\label{q3} What is the number $REM(n)$ of routine Einstein metrics {\rm(}up to
multiplication of the metric by a constant\,{\rm)} on the Ledger~--~Obata space $F^{n+1}/\diag(F)$?
\end{ques}
\smallskip

Theorem \ref{thm2} leads to the following natural question.

\begin{ques}\label{q4}
For a compact homogeneous space $G/H$ denote by $NEM(G/H)$ the number of Einstein $G$-invariant Riemannian metrics on $G/H$ up to isometry and homothety.
What is
$$
EMHS:=\limsup\limits_{\dim(G/H) \to \infty} \frac{\log \bigl( NEM(G/H) \bigr)}{\sqrt{\dim(G/H)}}
$$ over all compact homogeneous spaces?
\end{ques}

For the homogeneous spaces $G/H=F^{n+1}/\diag(F)$, where $F=SU(2)$, we have $\dim(G/H)=3n$,
$NEM(G/H)\geq p(n) \sim \frac {1} {4n\sqrt3} \exp\left({\pi \sqrt {\frac{2n}{3}}}\,\right)$ as $n\rightarrow \infty$ by theorem \ref{thm2}
and the Hardy~--~Ramanujan asymptotic formula (\ref{hrfor}). Hence the number EMHS above is not less than $\sqrt{2} \pi/3 \approx 1.480960979$.
\bigskip

{\bf Acknowledgements.} The work is partially supported by Grant 1452/GF4 of Ministry of Edu\-ca\-tion and Sciences of the Republic of
Kazakhstan for 2015\,-2017 and NSF of China (No.~11571182).
The second author is very obliged to
Chern Institute of Mathematics, Nankai University, Tianjin, China,
for hospitality and visiting position while the final version of this paper
have been prepared.
\bigskip


\begin{thebibliography}{99}

\bibitem{And1976}
\textsc{G.E. Andrews}, The Theory of Partitions, Addison-Wesley Publishing Company, 1976.

\bibitem{Bes}
\textsc{A.L. Besse}, Einstein Manifolds, Springer-Verlag, 1987.

\bibitem{CKL}
\textsc{Z. Chen}, \textsc{Y. Kang} and  \textsc{K. Liang},
Invariant Einstein metrics on three-locally-symmetric spaces, \textit{Commun. Anal. Geom.} (to appear), see also arXiv:1411.2694.

\bibitem{CN}
\textsc{Z. Chen} and  \textsc{Yu.G. Nikonorov},
Invariant Einstein metrics on generalized Wallach spaces,  submitted, see also arXiv: 1511.02567.

\bibitem{Er1942}
\textsc{P. Erd\H{o}s}, On an elementary proof of some asymptotic formulas in the theory of partitions, \textit{Ann. of Math. (2)} \textbf{43(3)} (1942), 437--450.

\bibitem{HarRam}
\textsc{G.H. Hardy} and \textsc{S. Ramanujan},
Asymptotic formulae in combinatory analysis, \textit{Proc. London Math. Soc. (2)}, \textbf{17(1)}  (1918), 75--115.

\bibitem{Jen}
\textsc{G.R.~Jensen},
The scalar curvature of left-invariant Riemannian metrics,
\textit{Indiana Univ. Math.~J.} \textbf{20} (1971), 1125--1143.

\bibitem{KN}
\textsc{S. Kobayashi} and \textsc{K. Nomizu},
Foundations of differential
geometry. Vol.~I, A Wiley-Interscience Publication, New York, 1963; Vol.~II, A Wiley-Interscience Publication, New York, 1969.

\bibitem{LO1968}
\textsc{A.J. Ledger} and \textsc{M. Obata}, Affine and Riemannian s-manifolds, \textit{J. Differential Geom.} \textbf{2} (1968), 451--459.

\bibitem{Ma2003}
\textsc{A. Mar{\'{o}}ti}, On elementary lower bounds for the partition function, \textit{Integers} \textbf{3} (2003), A10, 9pp.

\bibitem{Ne1951}
\textsc{D.J. Newman}, The evaluation of the constant in the formula for the number of partitions of $n$, \textit{Amer. J. Math.} \textbf{73} (1951), 599--601.

\bibitem{Nik1998}
\textsc{Yu.G. Nikonorov}, The scalar curvature functional and homogeneous Einstein metrics on Lie groups, \textit{Siberian Math. J.} \textbf{39(3)} (1998), 504--509.

\bibitem{Nik2002}
\textsc{Yu.G. Nikonorov}, Invariant Einstein metrics on the Ledger--Obata spaces, \textit{Algebra i analiz} \textbf{14(3)} (2002), 169--185 (in Russian),
English translation in \textit{St. Petersburg Math. J.} \textbf{14(3)} (2003), 487--497.


\bibitem{Nik2016}
\textsc{Yu.G. Nikonorov},
Classification of generalized Wallach spaces, \textit{Geom. Dedicata},  \textbf{181(1)} (2016), 193--212.

\bibitem{NRS}
\textsc{Yu.G. Nikonorov}, \textsc{E.D. Rodionov} and \textsc{V.V. Slavskii}, Geometry of homogeneous Riemannian manifolds, \textit{J. Math. Sci. (New York)} \textbf{46(7)} (2007), 6313--6390.


\bibitem{Wang1}
\textsc{M. Wang},
Einstein metrics from symmetry and bundle constructions. Surveys in differential geometry: essays on Einstein manifolds,
287--325, Surv. Differ. Geom., VI, Int. Press, Boston, MA, 1999.

\bibitem{Wang2}
\textsc{M. Wang},
Einstein metrics from symmetry and bundle constructions: a sequel. Differential geometry, 253--309, Adv. Lect. Math. (ALM), 22,
Int. Press, Somerville, MA, 2012.

\bibitem{WZ1986}
\textsc{M. Wang} and \textsc{W. Ziller}, Existence and nonexistence of homogeneous Einstein metrics, \textit{Invent. Math.} \textbf{84} (1986), 177--194.

\bibitem{WZ1991}
\textsc{M. Wang} and \textsc{W. Ziller},
On isotropy irreducible Riemannian manifolds, \textit{Acta Math.} \textbf{166(3--4)} (1991),  223--261.
\end{thebibliography}
\end{document}